\documentclass[10pt, a4paper]{amsart}
\usepackage{geometry} 
\usepackage{color}
\usepackage{graphicx}
\usepackage{mathrsfs}
\usepackage{bm}
\geometry{a4paper} 

\newtheorem{theorem}{Theorem}[section]
\newtheorem{corollary}[theorem]{Corollary}
\newtheorem{definition}{Definition}[section]
 
\newtheorem{proposition}[theorem]{Proposition}
\newtheorem{claim}[theorem]{Claim}
\newtheorem{example}[theorem]{Example}
%
\newcommand{\supp}{\operatorname{supp}}

\def\bS{\mathbf{S}}
\def\cU{\mathcal{U}}
\def\bU{\mathbf{U}}
\def\bW{\mathbf{W}}
\def\bu{\mathbf{u}}
\def\bw{\mathbf{w}}
\def\bX{\mathbf{X}}

\def\bx{\mathbf{x}}

\def\bd{\mathbf{d}}

\def\bbU{\mathbb{U}}
\def\bbW{\mathbb{W}}
\def\bV{\mathbf{V}}
\def\bv{\mathbf{v}}
\def\bt{\mathbf{t}}
\def\br{\mathbf{s}}
\def\br{\mathbf{r}}

\def\R{\mathbb{R}}
\def\N{\mathbb{N}}
\def\bM{\mathbb{M}}

\def\Mdos{\mathbb{M}^{2\times 2}}
\def\F{\mathbf F}
\def\bbH{\mathbb H}
\def\bT{\mathbf T}

\def\bbP{\mathbb P}
\def\A{\mathcal A}
\def\bA{\mathbf A}
\def\C{\mathcal C}
\def\cM{\mathbf{M}^{m\times N}}
\def\bcero{\mathbf 0}
\def\skeleton{\mathscr{S}_l}
\def\skeletonj{\mathscr{S}_{l, j}}
\def\skeletondos{\mathscr{S}_2}
\def\clase{\mathscr{C}}
\def\pele{\mathscr{P}}
\def\bbu{\bm{u}}
\def\bbv{\bm{v}}
\def\bbx{\bm{x}}
\def\bby{\bm{y}}

\numberwithin{equation}{section}

\title[Rank-one convexity and quasi-convexity]{Morrey's conjecture: rank-one convexity implies quasi-convexity for two-dimensional, two-component maps}
\author{Pablo Pedregal}
\thanks{Departamento de Matemáticas, Universidad de Castilla-La Mancha, 13071 Ciudad Real, SPAIN. Supported by grants PID2023-151823NB-I00, and  SBPLY/23/180225/000023}

\begin{document}
\maketitle

    \begin{abstract}
We prove that for two-component maps in dimension two, rank-one convexity is equivalent to quasiconvexity. The essential tool for the proof is a fixed-point argument for a suitable set-valued map going from one component to the other that preserves decomposition directions within the $(H_n)$-condition formalism. The existence of a fixed point ensures that, in addition to keeping decomposition directions, joint volume fractions are respected as well, leading to the fundamental fact that every two-dimensional, two-component gradient can be reached by lamination. When maps have more than two components, fixed points exist for every combination of two components, but they do not match in general. Higher dimension would require further insight on how to organize and deal with triangulations for piece-wise affine maps. 
    \end{abstract}

{\bf Key Words.} Approximation by piece-wise linear functions, $(H_n)$-conditions, fixed-point.

\vspace{10pt} {\bf AMS(MOS) subject classifications.} 65L05,
65L20, 47J30, 65D05.


\section{Introduction}
One of the main ingredients of the direct method of the Calculus of Variations (\cite{DacorognaH}) to show existence of minimizers for an integral functional of the kind
$$
I(\bbu)=\int_\Omega\psi(\nabla \bbu(\bbx))\,d\bbx
$$
is its weak lower semicontinuity. Here $\Omega\subset\R^N$ is a regular (Lipschitz), bounded domain, and feasible mappings $\bbu:\Omega\to\R^m$ are smooth or Lipschitz, so that $\nabla \bbu$ is a $m\times N$-matrix at each point $\bbx\in\Omega$. The weak lower semicontinuity property is in turn equivalent to suitable convexity properties of the continuous integrand $\psi:\cM\to\R$. Morrey (\cite{Morrey}, \cite{MorreyB}) proved that this weak lower semicontinuity (in $W^{1, \infty}(\Omega; \R^m)$) is equivalent to the quasi-convexity of the integrand $\psi$, namely, 
$$
\psi(\F)\le\frac1{|D|}\int_D\psi(\F+\nabla \bbv(\bbx))\,d\bbx
$$
for every $\F\in\cM$, and every test map $\bbv$ in $D$. This concept does not depend on the domain $D$, and can, equivalently, be formulated in terms of periodic mappings (\cite{Sverak}) so that such a density $\psi$ is quasiconvex when
$$
\psi(\F)\le\int_Q\psi(\F+\nabla \bbv(\bby))\,d\bby
$$
for all $\F\in\cM$, and every periodic mapping $\bbv:Q\to\R^m$. Here $Q\subset\R^N$ is the unit cube. 

Unfortunately, the issue is far from settled by simply saying this, since even Morrey realized that it is not at all easy to decide when a given density $\psi$ enjoys this property. For the scalar case, when either of the two dimensions $N$ or $m$ is unity, quasi-convexity reduces to usual convexity. But for genuine vector situations, it is not so. As a matter of fact, necessary and sufficient conditions for quasi-convexity in the vector case ($N, m>1$) were immediately sought, and important new convexity conditions were introduced:
\begin{itemize}
\item Rank-one convexity. A continuous integrand $\psi:\cM\to\R$ is said to be rank-one convex if
$$
\psi(t_1\F_1+t_2\F_2)\le t_1\psi(\F_1)+t_2\psi(\F_2),\quad t_1+t_2=1, t_1, t_2\ge0,
$$
whenever the difference $\F_1-\F_2$ is a rank-one matrix. 
\item Poly-convexity. Such an integrand $\psi$ is poly-convex if it can be rewritten in the form 
$$
\psi(\F)=g(\bM(\F))
$$ 
where $\bM(\F)$ is the vector of all minors of $\F$, and $g$ is a convex (in the usual sense) function of all its arguments.
\end{itemize}

It was very soon recognized that quasi-convexity implies rank-one convexity (by using a special class of test fields), and that poly-convexity is a sufficient condition for quasi-convexity (\cite{Ball}) The task suggested itself as trying to prove or disprove the equivalence of these various kinds of convexity. In the scalar case all three coincide with usual convexity, so that we are facing a purely vector phenomenon. It turns out that these three notions of convexity are different, and counterexamples of various sorts have been found over the years. See \cite{AlibertDacorogna}, \cite{DacorognaDouchetGangboRappaz}, \cite{SerreA}, \cite{Terpstra}.

If we focus on the equivalence of rank-one convexity and quasi-convexity, Morrey conjectured that they are not equivalent (\cite{Morrey}), though later he simply stated it as an unsolved problem (\cite{MorreyB}). The issue remained undecided until the surprising counter-examble by V. Sverak (\cite{Sverak}) after some other additional and very interesting results (\cite{SverakB}, \cite{SverakC}, \cite{SverakE}). What is quite remarkable is that the original counter-example is only valid when $m\ge3$, and later attempts to extend it for $m=2$ failed (\cite{bandeiraornelas}, \cite{PedregalH}, \cite{PedregalSverakB}). See also \cite{grabovsky} for more such examples from a different viewpoint again in cases where $m>2$. Other counterexamples have not been found. Some efforts by the author were definitely discarded in \cite{sebsze}. References \cite{faracolaszlo}, and \cite{KristensenB} are also relevant here.

The situation for two-component maps has, therefore, stayed unsolved, though some evidence in favor of the equivalence has been gathered throughout the years. In particular, when additional ingredients or properties are assumed, the equivalence can some times be shown. There is a bunch of very interesting works in this regard; see \cite{benkru}, \cite{ChaudMuller}, \cite{ghiba}, \cite{grabovskyd}, \cite{kruzik}, \cite{martin}, \cite{Muller}, \cite{MullerB}, \cite{ParryA}, \cite{vossd}. It is also interesting to point out that for quadratic densities, rank-one convexity and quasi-convexity are equivalent regardless of dimensions. This has been known for a long time (\cite{Ball}, \cite{MorreyB}), and it is not difficult to prove it by using Plancherel's formula. A different point of view is taken in \cite{BandeiraPedregal}.  Another field where the resolution of this equivalence for two components maps would have an important impact is the theory of quasiconformal maps in the plane. There is a large number of references for this topic. See \cite{astalaiwaniecmartin} for a rather recent account, or \cite{astala} for a more focused article. In particular, if the equivalence between rank-one convexity and quasi-convexity for two component maps turns out to be true, then the norm  of the corresponding Beurling-Ahlfors transform equals $p^*-1$ (\cite{iwaniec}). More consequences would likely follow if such equivalence is proved. 

Another chapter where many recent efforts have been made, given the intrinsic difficulties of analytical ideas, is concerned with the numerical evidence for a potential counter-example in favor of Morrey's conjecture. None of them has turned out to be conclusive. See \cite{awi}, \cite{dong}, \cite{guerra}, \cite{voss}. 

In this note we prove that indeed for $m=N=2$, rank-one convexity is equivalent to quasi-convexity. The way in which we are going to think about the problem is by using the dual formulation of this equivalence through Jensen's inequality. What we will actually show is that, when $m=N=2$, every homogeneous gradient Young measure is a laminate. See Chapter 9 in \cite{PedregalI}, and \cite{pedlam}. Equivalently, we will focus on showing that every periodic gradient can be achieved by lamination. 

More specifically,  take $l\in\N$, and $\tau_l$, a regular triangulation of the unit cube $Q\subset\R^2$ with the three normals $(1, 0)$, $(0, 1)$, and $(1, 1)$. $l$ is a parameter indicating a certain level of discretization.
As $l$ becomes larger and larger, elements in $\tau_l$ are finer and finer triangles with the same normals. Suppose we are given a gradient $(\nabla u, \nabla v)$ with two components 
$$
(u, v):Q\subset\R^2\to\R^2,
$$
where $Q$ is the unit cube in $\R^2$, which is $Q$-periodic, continuous, piecewise-affine with respect to triangulation $\tau_l$. By a standard density argument about approximation by continuous, piece-wise affine mappings, it suffices, to reach our goal, to show that the corresponding discrete, homogeneous underlying gradient Young measure is a laminate.

\begin{theorem}\label{objetivo}
\begin{enumerate}
\item Let $l\in\N$ be arbitrary. For every pair $(u, v)$ of $Q$-periodic, continuous, $\tau_l$-piece-wise affine functions, the discrete probability measure 
$$
\nu_{(\nabla u, \nabla v)},\quad \supp(\nu)\subset\R^{2\times2},
$$ 
associated with its gradient is a laminate.
\item Rank-one convexity implies quasiconvexity for two-dimensional, two-component maps.
\end{enumerate}
\end{theorem}

What is essential or special about $m=2$? This is  a question that one has to understand, as it seems quite central for a final resolution of the problem. The answer turns out to be quite enlightening: for two component maps, one can define an appropriate map going from one component to the other, and show the existence of a fixed point for such a map that translates into a rank-one decomposition for any such two-component gradient. For more than two components, more than one map would be involved, and fixed points for every couple of components may not match. This fixed point result (Kakutani's) is classical and nothing but a natural generalization of the usual Brower fixed point theorem. 

In a more explicit way, the two-component map $(u, v)$ establishes, through triangulation $\tau_l$, a very clear way of moving from manipulations on the gradient of the first component $\nabla u$ to the same manipulations on the gradient $\nabla v$ of the second component by simply replacing $\bu_i$ by the corresponding $\bv_i$ in the same element of the triangulation $\tau_l$, if the finite support of $(\nabla u, \nabla v)$ is the set of pairs $\{(\bu_i, \bv_i)\}_i$. 
This procedure is incorporated in the definition of our mapping. Such map is in charge of keeping track of decomposition directions as in the definition of laminates and $(H_n)$-conditions (\cite{DacorognaE}, \cite{pedlam}). We assume readers to be familiar with this material as it is essential to understand our perspective. 

Given a probability measure supported in the discrete set of vectors $\{\bu_i\}_i$ of the first gradient $\nabla u$, that is decomposed in the form of a $(H_n)$-condition along a set of successive directions and weights, we focus on those decompositions, performed in the same way for the second gradient $\nabla v$, that preserve the family of decomposition directions coming from the first component, but possibly for a different family of volume fractions or weights. If we define a set-valued operation going from the first vector of weights for the first component to the set of weights of the second component preserving decomposition directions at all levels, intuitively a fixed-point for such a map would respect:
\begin{enumerate}
\item decomposition directions for both components (this is ensured by the definition of the map itself); and 
\item equal volume fractions for the two components jointly, because the passage from one component to the other through the operation $\bu_i\mapsto\bv_i$  respects such volume fractions for a fixed point.
\end{enumerate}
Therefore fixed points for such a map are identified with joint, i.e. simultaneously in the two components, $(H_n)$-conditions
whose decomposition directions are parallel, i.e. with laminates. Our claim, then, reduces to proving the existence of at least one fixed point for such a map. 

Most of the technicalities are related to showing that a suitable framework can be set up so that the appropriate assumptions hold for the fixed-point result to be applied.
One crucial issue, though, is to understand what is special about a probability measure associated with a gradient $(\nabla u, \nabla v)$, since we know that not every probability measure supported in $\R^{2\times2}$ should allow the treatment through such fixed point argument. 

It will soon be understood why we do not deal with the case $N>2$. The strategy of the proof hinges on various basic facts whose extension to higher dimension is not clear or does not hold. On the one hand, we need to manipulate in a rather explicit manner periodic, continuous, piece-wise linear functions with respect to triangulations of the unit cube in $\R^N$. As soon as $N=3$, this becomes highly technical where enumerative procedures for planar interfaces between elements turn out to be quite tricky to organize as in the practical use of finite elements. Beyond this difficulty, Claim \ref{corte} below can hardly admit a parallel result in dimension $N=3$ or higher. Further insight is therefore required to treat the two-component, higher dimensional case, either to find suitable substitutes for those facts or for finding a counter-example. 

Before moving to discuss our main proof, we would like to mention that one of the main applied fields where vector variational problems are relevant is non-linear elasticity (\cite{Ball}). In particular, polyconvexity has played a major role in existence results. See also \cite{Ciarlet}. A main hypothesis to be assumed in this area is the rotationally invariance, as well as the behavior for large deformations. See \cite{DacorognaKoshigoe} for a discussion on all these notions of convexity under this invariance. Higher-order theories have also been explored, at least from an abstract point of view (\cite{Dal Maso et al.}, \cite{Meyers}). More general concepts of quasiconvexity have been introduced in \cite{Fonseca-Muller}. Recent interesting results about approximation by polynomials are worth mentioning \cite{Heinz}. 
Explicit examples of rank-one convex functions can be found in various works: \cite{BandeiraPedregal}, \cite{DacorognaDouchetGangboRappaz}, \cite{SverakB}, among others. See also \cite{VanHove1}, \cite{VanHove2}. The recent book \cite{rindler} is to be considered.

Finally, it may be instructive to write Theorem \ref{objetivo} in a more transparent form for readers not familiar with the subleties of vector variational problems. 
\begin{theorem}
Let 
$$
\psi(\bX):\Mdos\to\R
$$
be an arbitrary function. The two following statements about $\psi$ are equivalent:
\begin{enumerate}
\item $\psi$ is rank-one convex, i.e. 
$$
\Psi(t\F_1+(1-t)\F_0)\le t\psi(\F_1)+(1-t)\psi(\F_0)
$$
whenever $\F_1-\F_0$ is a matrix of rank-one;
\item $\psi$ is quasi-convex, i.e. for every bounded regular domain $D\subset\R^2$ with $|\partial D|=0$, for every matrix $\F\in\Mdos$, and for every Lipschitz map $\bbu(\bbx):D\to\R^2$ belonging to $W^{1, \infty}_0(D; \R^2)$ vanishing at $\partial D$, we have
$$
|D|\psi(\F)\le\int_D\psi(\F+\nabla\bbu(\bbx))\,d\bbx.
$$
\end{enumerate}
\end{theorem}
This result is false in the case $\cM$ for $m\ge3$ and every $N\ge2$ (\cite{Sverak}), and it is not known if it is true for $m=2$ and $N\ge3$.

\section{The overall strategy}\label{diez}
It may be instructive to describe in heuristic but explicit terms what we are trying to accomplish and how we plan to tackle the proof of Theorem \ref{objetivo}, so that readers may know in advance where we are heading before getting into precise statements. 

Suppose we are given a discrete probability measure $\mu$ with finite support in $\Mdos$, the space of $2\times2$-matrices, and a vanishing barycenter
$$
\mu=\sum_it_i\delta_{\F_i},\quad t_i>0, \sum_it_i=1,\quad \sum_it_i\F_i=\bcero. 
$$
How is one to show that $\mu$ can indeed be reached by lamination? One way or another, we need to find a suitable representation of $\mu$ in the form
\begin{equation}\label{decomposition}
\mu=\sum_j s_j\delta_{\bA_j},\quad s_j>0, \sum_js_j=1, \bA_j\in\{\F_i\}, \quad t_i=\sum_{j: \bA_j=\F_i}s_j,
\end{equation}
in such a way that with the initial set of pairs
\begin{equation}\label{decompositiondos}
(s_1, \bA_1),\quad (s_2, \bA_2), \quad\dots\quad (s_m, \bA_m), 
\end{equation}
we can build a successive collection of $(H_n)$-conditions, according to its proper rule, and proceeding recursively up until the final pair $(1, \bcero)$. That basic rule consists in selecting two pairs $p$, $q$, from \eqref{decompositiondos} in such a way that $\bA_p-\bA_q$ is a rank-one matrix, and passing from \eqref{decompositiondos} to the new collection of pairs replacing those two selected pairs by its associated mixture
$$
(s_p+s_q, \bA_{pq}),\quad \bA_{pq}=\frac{s_p}{s_p+s_q}\bA_p+\frac{s_q}{s_p+s_q}\bA_q,
$$
and retaining unchanged all the others. In performing such step or mixture we refer to the difference $\bA_p-\bA_q$ as the corresponding decomposition direction, and the weights
$$
\lambda_p=\frac{s_p}{s_p+s_q}, \quad\lambda_q=\frac{s_q}{s_p+s_q}, \quad \lambda_p+\lambda_q=1,
$$
as the associated relative volume fractions of the mixture. 
Note that
$$
\bA_p=\bA_{pq}+\lambda_q(\bA_p-\bA_q),\quad \bA_q=\bA_{pq}-\lambda_p(\bA_p-\bA_q).
$$
This process is the basic step of the $(H_n)$-formalism (check references given in the Introduction), and produces successive mass-points and weights for intermediate probability measures $\{\mu_k\}$ as well as decomposition directions. The crucial ingredient for laminates is the rank-one condition for all decomposition directions.  

If we assume now that $\mu$ is coming from the discrete gradients of two continuous, scalar functions $(u, v)$ with constant gradients $(\nabla u_i, \nabla v_i)$ with respect to some and the same triangulation $\tau_l$ ($l$ is an arbitrary positive integer indicating level of discreteness)
\begin{equation}\label{inicial}
\mu=\sum_it_i\delta_{(\nabla u_i, \nabla v_i)},
\end{equation}
where will the decomposition in \eqref{decomposition} come from enabling for the $(H_n)$-formalism to be applied, guaranteeing the rank-one condition for all decomposition directions, and so be sure that $\mu$ is a laminate? Needless to say, we are seeking a general result to be valid for any such discrete gradient.

Our strategy is the following. Consider the two marginals
$$
\mu_u=\sum_it_i\delta_{\nabla u_i},\quad \mu_v=\sum_it_i\delta_{\nabla v_i}.
$$
Take any decomposition like \eqref{decompositiondos} for the first marginal $\mu_u$
\begin{equation}\label{paresz}
(s_1, \bu_1),\quad (s_2, \bu_2), \quad\dots\quad (s_m, \bu_m),\quad \{\bu_j\}=\{\nabla u_i\}.
\end{equation}
We can proceed with arbitrary mixtures for this set of pairs because the support of $\mu_u$ is contained in $\R^2$, and therefore the rank-one condition for decomposition directions is not an issue. Because $\nabla u_i$ and $\nabla v_i$ are linked to each other being the gradients of $u$ and $v$, respectively, in the same element of the triangulation $\tau_l$, we can consider an identical set of pairs as in \eqref{paresz} replacing $\bu_i$ by $\bv_i$ for all $i$ if $\{\bv_j\}=\{\nabla v_i\}$.
In doing so, we cannot expect in the least that corresponding decomposition directions in successive levels of the form
$$
\bU_j-\bU_k,\quad \bV_j-\bV_k,
$$
respectively for both sets of pairs, will be parallel to each other to ensure the rank-one condition for the matrix
$$
\begin{pmatrix}\bU_j-\bU_k\\\bV_j-\bV_k\end{pmatrix}.
$$
Note however that this rank-one condition is guaranteed for the first-level due to the rank-one compatibility across planar interfaces for gradients. 
We therefore envision the possibility of forcing this parallelism between decomposition directions, 
at the expense of allowing different volume fractions for the second marginal $\mu_v$
\begin{equation}\label{paresy}
(S_1, \bv_1),\quad (S_2, \bv_2), \quad\dots\quad (S_m, \bv_m),\quad \{\bv_j\}=\{\nabla v_i\}.
\end{equation}
The family of weights $\{S_j\}$ is to be selected in such a way that corresponding decomposition directions for both components when performing exactly the same mixtures at all levels for both sets of pairs in \eqref{paresz} and \eqref{paresy} turn out to be parallel. 

More specifically, imagine we could succeed in deciding on the following elements.
\begin{enumerate}
\item One concrete way, skeleton or mold, linked to the underlying triangulation, specifying 
what triangles are to be mixed at the first stage, and what mixtures of triangles are to be mixed among themselves in successive levels until the final mixture involving the complete mass of the unit cube $Q$. This mold must be independent of the particular gradients considered and depend only on the elements of $\tau_l$. It should be selected from the outset. We refer to it as $\skeleton$. 
\item A certain collection $\Theta_l$ of vectors of feasible weights that can be interpreted in a way to produce respective sets of pairs as in \eqref{paresz}, any time a function $w$ and its gradients $\{\nabla w_i\}$ with respect to $\tau_l$, are provided.
\end{enumerate}
Given two arbitrary functions, $u$ and $v$, $Q$-periodic, continuous, piece-wise affine with respect to $\tau_l$, we can define a (set-valued) operator $\bT$ from $\Theta_l$ to itself as follows. Let the vector of weights $\bt\in\Theta_l$ be given, and calculate the associated full set of decomposition directions along $\skeleton$ with the gradients $\{\nabla u_i\}$. Define the image $\bT(\bt)\subset\Theta_l$ as the collection of vectors of weights $\br\in\Theta_l$ that are capable of reproducing the same (parallel) decomposition directions along $\skeleton$ but calculated with the second set of gradients $\{\nabla v_i\}$. The existence of a fixed point for $\bT$, ensuring the same volume fractions for both components, will definitely be a proof that $\mu$ in \eqref{inicial} can be reached by lamination by the very way in which the operator $\bT$ has been defined. The arbitrariness of $l$ leads to our result. 

Our job consists in showing that this formal framework with the claimed features is possible. In addition to fixing notation, we need to define $\Theta_l$, the mold $\skeleton$, how a set of gradients $\{\nabla w_i\}$ enters into our discussion, and how to define the operator $\bT$  once $(u, v)$ are given and $\skeleton$ selected. 

The whole point of our approach is then to use Kakutani's fixed point theorem to prove the existence of at least one such fixed point for $\bT$.
Our main proofs then focus on showing that our analytical framework has been so defined as to allow for the hypotheses of Kakutani's theorem to be checked. This objective explains why we have to start with care for the description of those main ingredients $\skeleton$ and $\Theta_l$, and how they relate to each other. 

\section{$(H_n)$-conditions}\label{tres3}
Since these structures are central to rank-one convexity and laminates, they should play a fundamental role in our strategy. This justifies why we need to be more precise about the ingredients underlying these structures. Two basic sources where $(H_n)$-conditions are defined and treated are \cite{DacorognaE}, \cite{pedlam} (see also \cite{PedregalI}). Though they can be examined in a much more general context, we restrict attention specifically to our situation in this work. 

\begin{definition}\label{hn}
Let 
$$
\bbH=\{(s_i, \bw_i): 1\le i\le m\}
$$
be a finite collection of pairs such that
$$
s_i\ge0, \sum_i s_i=1, \bw_i\in\R^2,\quad \bcero=\sum_is_i\bw_i.
$$
A $(H_n)$-condition for $\bbH$ is a sequence of discrete probability measures 
$$
\{\mu_k: 1\le k\le m\},\quad \mu_k=\sum_i s_i^{(k)}\bw_i^{(k)}
$$ 
where
$$
\mu_m=\sum_is_i\bw_i, s_i^{(m)}=s_i, \bw_i^{(m)}=\bw_i, \quad \mu_{1}=\delta_{\bcero}, s_1^{(1)}=1, \bw_1^{(1)}=\bcero.
$$
Depending on whether we describe the passage $\mu_k\mapsto\mu_{k-1}$ or $\mu_{k-1}\mapsto\mu_k$, we talk, respectively, about the  bottom-to-top or top-to-bottom recursive procedure. 
\begin{itemize}
\item Bottom-to-top description $\mu_k\mapsto\mu_{k-1}$. It is determined recursively by the basic rule that consist in selecting two different indexes $1\le i_p\le i_q\le k$, and replacing the two pairs 
$$
(s^{(k)}_{i_p}, \bw^{(k)}_{i_p}),\quad (s^{(k)}_{i_q}, \bw^{(k)}_{i_q})
$$
in the support of $\mu_k$ by the single pair
\begin{equation}\label{pesosrel}
(s^{(k-1)}_{i_pi_q}, \bw_{i_pi_q}^{(k-1)}),\quad s^{(k-1)}_{i_pi_q}=s^{(k)}_{i_p}+s^{(k)}_{i_q},
\bw_{i_pi_q}^{(k-1)}=\frac{s^{(k)}_{i_p}}{s^{(k-1)}_{i_pi_q}}\bw^{(k)}_{i_p}+\frac{s^{(k)}_{i_q}}{s^{(k-1)}_{i_pi_q}}\bw^{(k)}_{i_q},
\end{equation}
in the support of $\mu_{k-1}$, and retaining unchanged the other pairs in the support of $\mu_k$ passing directly to the support of $\mu_{k-1}$. Once this operation has been performed, the resulting pairs in the support of $\mu_{k-1}$ are relabelled 
$$
\mu_{k-1}=\sum_is_i^{(k-1)}\delta_{\bw_i^{(k-1)}}
$$
to proceed recursively.
We simply say that pairs $i_p$ and $i_q$ have been mixed to pass from $\mu_k$ to $\mu_{k-1}$, and refer to the difference
\begin{equation}\label{dirdesz}
\bd_{i_qi_p}^{(k)}\equiv\bw^{(k)}_{i_q}-\bw^{(k)}_{i_p}
\end{equation}
as the corresponding decomposition direction. 
\item Top-to-bottom description $\mu_{k-1}\mapsto\mu_k$. We select one pair $(s_i^{(k-1)}, \bw_i^{(k-1)})$ in the support of $\mu_{k-1}$, and decide or are given the corresponding decomposition direction $\bd$, to play the role of the vector in \eqref{dirdesz}, and the relative volume fraction $\lambda\in(0, 1)$ as in \eqref{pesosrel}. Then the two pairs
$$
(s_i^{(k-1)}\lambda, \bw_i^{(k-1)}+(1-\lambda)\bd),\quad (s_i^{(k-1)}(1-\lambda), \bw_i^{(k-1)}
-\lambda\bd),
$$
become part of the support of $\mu_k$, while the other pairs pass unchanged from $\mu_{k-1}$ to $\mu_k$.
\end{itemize}
Both methods are, of course, equivalent. 
\end{definition}

In the bottom-to-top description, pairs in the support of $\mu_m$ are being mixed successively and at different levels, mixtures of mixtures (of mixtures), to produce the intermediate probability measures $\mu_k$ until reaching the final trivial measure $\mu_{1}$. If one is not interested in keeping track of which pairs are mixed at each iteration, couples in the support of each intermediate probability measure $\mu_k$ can be renumbered so that the first two pairs ($i_p=1$, $i_q=2$) are always the ones to be mixed at every iteration according to the basic rule. However, since it is especially important for us to specify the global way of mixing pairs across the family $\{\mu_k\}$, we avoid such reorganizations. Indeed, we will explicitly refer to this global way of passing from $\mu_m$ to $\mu_{1}$  through the intermediate probabilities $\mu_k$ as the mold or skeleton of the $(H_n)$-condition. In our situation here, there is no need to insist in that decomposition directions in \eqref{dirdesz} should be rank-one, as it is typically demanded, because the $\bw$'s are taken to be vectors, and hence decomposition directions are always rank-one. 

\begin{definition}
Let the finite set of pairs $\bbH$ as in Definition \ref{hn} be given. A mold or skeleton for a $(H_n)$-condition for $\bbH$ is a global way of specifying which pairs in the support of each probability $\mu_k$ are to be mixed to produce $\mu_{k-1}$ 
in the overall recursive process according to the previous definition. We will reserve the letter $\mathscr{S}$ to refer to such molds. 
\end{definition}

Typically, if initial pairs are labeled through subindex $i$, $1\le i\le m$, as in Definition \ref{hn}, $\mathscr{S}$ can be determined through the same set of indexes at successive levels using some delimiters, like brackets, to enclose pairs to be mixed as iterations proceed. 

\begin{example}
If we start with $\{1, 2, 3, 4, 5\}$ for a probability measure with a finite support of at most five vectors (note that some of those vectors could occur more than once in the enumeration of the starting set of pairs), then some possibilities for $\mathscr{S}$ are
\begin{gather}
[1, 3], 2, 4, 5\mapsto [[1, 3], 5], 2, 4\mapsto [[[1, 3], 5], 4], 2\mapsto [[[[1, 3], 5], 4], 2],\nonumber\\
1, [2, 4], 3, 5\mapsto 1, [2, 4], [3, 5]\mapsto 1, [[2, 4], [3, 5]]\mapsto [1,  [[2, 4], [3, 5]]],\nonumber\\
1, 2, [3, 5], 4\mapsto 1, [2, [3, 5]], 4\mapsto [1, 4], [2, [3, 5]]\mapsto [[1, 4], [2, [3, 5]]].\nonumber
\end{gather}
The last case can also be implemented in the form
$$
1, 2, [3, 5], 4\mapsto [1, 4], 2, [3, 5]\mapsto [1, 4], [2, [3, 5]]\mapsto [[1, 4], [2, [3, 5]]], 
$$
or in short
$$
[1, 4], 2, [3, 5]\mapsto [1, 4], [2, [3, 5]]\mapsto [[1, 4], [2, [3, 5]]].
$$
The second example above can also be expressed as
$$
1, [2, 4], [3, 5]\mapsto 1, [[2, 4], [3, 5]]\mapsto [1,  [[2, 4], [3, 5]]].
$$
It should be clear what is meant by asserting that the mold $\mathscr{S}$ is given by, for example, 
$$
1, [2, 4], [3, 5]\mapsto 1, [[2, 4], [3, 5]]\mapsto [1,  [[2, 4], [3, 5]]]
$$ 
for a set of (unspecified) five pairs weights/vectors:
\begin{enumerate}
\item first, pairs $2$ and $4$ are mixed according to the basic rule in Definition \ref{hn}; pairs $3$ and $5$ can be simultaneously transformed because participating pairs in these two groups are disjoint;
\item then, the resulting pairs coming from those two first mixtures are in turn mixed recursively in a second level, by using the same basic, transformation rule; 
\item finally, the first initial pair is mixed with the resulting pair from the previous item. 
\end{enumerate}
Note how $\mathscr{S}$ is independent of what the specific pairs are: every time we feed five true pairs weights/vectors to mold $\mathscr{S}$, a specific $(H_n)$-condition is built with a full set of corresponding decomposition directions \eqref{dirdesz}, one for each mixture that is implemented.
\end{example}

Let us stress this issue. Given the initial set of pairs $\bbH$, each possible mold $\mathscr{S}$ for $\bbH$ would give rise to a different $(H_n)$-condition; conversely, given a fixed mold $\mathscr{S}$ for a certain number $m$ of pairs, corresponding $(H_n)$-conditions are generated every time we feed $\mathscr{S}$ with a real set of pairs weights/vectors 
$$
\bbH=\{(s_i, \bw_i): 1\le i\le m\}.
$$

The important point is to realize that given the initial set of pairs $\bbH$ and mold $\mathscr{S}$, the full $(H_n)$-condition is completely determined by those two elements, and, in particular, all decomposition directions. In this way, if we are given two sets of pairs $\bbH_1$ and $\bbH_2$ with the same number of ordered elements, the same mold $\mathscr{S}$ can be used with both, as indicated above, to produce two similar $(H_n)$-conditions. We can, therefore, compare the full set of decomposition directions across levels produced for both $\bbH_1$ and $\bbH_2$, and check whether they are parallel or not. 

It must be clear by now that there is a great number of possible molds $\mathscr{S}$ for a given initial set $\bbH$ of $m$ pairs, especially if $m$ is large. The example above makes us realize that it is not easy to describe such molds in a coherent way in full generality. 
There is, however, one particular class of molds which admit a more transparent recursive structure. They are given in a dyadic manner. 

\begin{definition}
A mold $\mathscr{S}$ is a dyadic mold in $m$ levels for an initial set $\bbH$ of $2^m$ elements, if mixtures are implemented in a complete, pair-wisely manner at each level, before proceeding to the next. 
\end{definition}

A simple example will clarify this concept.

\begin{example}
Take $\bbH$ with $8=2^3$ elements $\{1, 2, 3, 4, 5, 6, 7, 8\}$. A possible dyadic mold could be
$$
[1, 5], [2, 8], [3, 4], [6, 7]\mapsto [[1, 5], [6, 7]], [[2, 8], [3, 4]]\mapsto [[[1, 5], [6, 7]], [[2, 8], [3, 4]]].
$$
Another possibility is
$$
[1, 6], [2, 3], [4, 7], [5, 8]]\mapsto [[1, 6], [4, 7]], [[2, 3], [5, 8]]\mapsto [[[1, 6], [4, 7]], [[2, 3], [5, 8]]].
$$
\end{example}

\begin{proposition}\label{permutacion}
Given a dyadic mold $\mathscr{S}$ in $m$ levels, there is always a unique permutation of the set $\{1, 2, \dots, 2^m\}$ such that mixtures always take place in the form $[2i-1, 2i]$ in all levels.
\end{proposition}
The proof is elementary. In the second case of the previous example the permutation is defined through the identification
$$
[[[1, 6], [4, 7]], [[2, 3], [5, 8]]]\mapsto [[[1, 2], [3, 4]], [[5, 6], [7, 8]]].
$$

Because of this proposition, there is no loss of generality in describing $(H_n)$-conditions organized by a dyadic mold in the following way. 
\begin{enumerate}
\item Initialization. We identify in the vector
$$
\bt=(t_1, t_2, \dots, t_{2^m})
$$
our initial set of weights for the $(H_n)$-condition, and put
$$
t_k^{(m)}=t_k,\quad \bw_k^{(m)}\in\R^2,
$$
for elements to initialize the $(H_n)$-condition at the lowest level.
\item Recursion. 
\begin{enumerate}
\item Weights for new level, and relative weights. For 
$$
p=m-1, m-2, \dots, 1, 0,\quad 1\le k\le 2^p,
$$ 
put
$$
t_k^{(p)}=t_{2k-1}^{(p+1)}+t_{2k}^{(p+1)},
$$
and
\begin{equation}\label{pesosz}
\lambda_k^{(p)}=\begin{cases}
\frac{t_{2k}^{(p+1)}}{t_k^{(p)}},&  t_k^{(p)}>0\\
1/2,&  t_k^{(p)}=0\end{cases}.
\end{equation}
In this way
\begin{equation}\label{relabs}
t_{2k}^{(p+1)}=t_k^{(p)}\lambda_k^{(p)},\quad t_{2k-1}^{(p+1)}=t_k^{(p)}(1-\lambda_k^{(p)}),
\end{equation}
and $t_1^{(0)}=1$. 
\item Decomposition direction. For 
$$
p=m-1, m-2, \dots, 1, 0,\quad 1\le k\le 2^p,
$$ 
define
\begin{equation}\label{dirdes}
\bW_k^{(p)}=\bw_{2k-1}^{(p+1)}-\bw_{2k}^{(p+1)},
\end{equation}
as the corresponding decomposition direction. 
\item Mass points for new level. For 
$$
p=m-1, m-2, \dots, 1, 0,\quad 1\le k\le 2^p,
$$ 
set
\begin{equation}\label{equis}
\bw_k^{(p)}=(1-\lambda_k^{(p)})\bw_{2k-1}^{(p+1)}+\lambda_k^{(p)}\bw_{2k}^{(p+1)}.
\end{equation}
\end{enumerate}
\end{enumerate}
This recursive procedure is repeated $m$ times until one gets one unique final vector $\bw_1^{(0)}$. 
Decomposition directions $\bW_k^{(p)}$, vectors $\bw_k^{(p)}$,
and relative weights $\lambda_k^{(m-1)}$ as well, depend upon $\bt$, the initial vector of weights, and on $\{\bw^{(m)}_k\}$, the initial set of vectors. But regarding this set of vectors as given, we will make the dependence on $\bt$ explicit by simply putting
\begin{equation}\label{elementos}
\bW_k^{(p)}(\bt), \quad\bw_k^{(p)}(\bt), \quad\lambda_k^{(m-1)}(\bt).
\end{equation}

This is the bottom-to-top description. Equivalently, we can adopt the top-to-bottom procedure which starting from the trivial probability measure and pair
$$
\mu_1=\delta_{\bw_1^{(0)}},\quad (1, \bw_1^{(0)}),
$$ 
and given the net of decomposition directions
and relative volume fractions
$$
(\bW_k^{(p)}, \lambda_k^{(p)}),\quad p=0, 1, 2, \dots, m-1, 1\le k\le 2^p,
$$
we can build, in a successive dyadic manner, the intermediate probability measures $\mu_p$ until the final $\mu_m$. 

There is finally another special class of molds for $(H_n)$-conditions which will be important for. They build upon dyadic sub-molds. 
\begin{definition}\label{tridiadico}
Suppose we have a set of pairs $\bbH$ with $3\times 2^m$ elements for some $m\in\N$. We will say that a mold $\mathscr{S}$ for $\bbH$ is tri-dyadic if mixtures corresponding to the three subsets
$$
\{1, 2, \dots, 2^m\},\quad \{2^m+1, 2^m+2, \dots, 2\times 2^m\},\quad \{2\times2^m+1, 2\times2^m+2, \dots, 3\times 2^m\},
$$
are not mixed among themselves until they all are exhausted individually in a dyadic manner.
\end{definition}

To illustrate this concept another explicit example may help.
\begin{example}
Take the set of indexes 
$$
\{1, 2, \dots, 8, 8+1, 8+2, \dots, 8+8, 8+8+1, 8+8+2, \dots, 8+8+8\},
$$
i.e.
$$
\{1, 2, \dots, 8 / 9, 10, \dots, 16 / 17, 18, \dots, 23, 24\}.
$$
A tri-dyadic mold proceed independently in each subset
$$
\{1, 2, \dots, 8\},\quad \{8+1, 8+2, \dots, 8+8\},\quad \{8+8+1, 8+8+2, \dots, 8+8+8\}
$$
in a dyadic form like, for example,
\begin{gather}
[1, 5], [2, 8], [3, 4], [6,7]\mapsto [[1, 5], [6, 7]], [[2, 8], [3, 4]]\mapsto [[[1, 5], [6, 7]], [[2, 8], [3, 4]]],\nonumber\\
[9, 14], [10, 16], [11, 15], [12, 13]\mapsto [[9, 14], [11, 15]], [[10, 16], [12, 13]]\mapsto \nonumber\\
[[[9, 14], [11, 15]], [[10, 16], [12, 13]]],\nonumber\\
[17, 19], [18, 22], [20, 24], [21, 23]\mapsto [[17, 19], [21, 23]], [[18, 22], [20, 24]]\mapsto\nonumber\\
[[[17, 19], [21, 23]], [[18, 22], [20, 24]]].\nonumber
\end{gather}
Notice the three levels of mixture for the three cases. 
Finally, the resulting mixtures in the three processes
\begin{gather}
[[[1, 5], [6, 7]], [[2, 8], [3, 4]]], \nonumber\\
[[[9, 14], [11, 15]], [[10, 16], [12, 13]]], \nonumber\\
[[[17, 19], [21, 23]], [[18, 22], [20, 24]]],\nonumber
\end{gather}
are treated or mixed in any fixed, specific way to reach the final trivial measure. 
\end{example}

\section{$(H_n)$-conditions for gradients: molds through triangulations}\label{dosz}
We will be working with $Q$-periodic, continuous, piecewise affine, two-component maps with respect to a specific family of triangulations $\{\tau_l\}$ of the unit cube $Q$ of $\R^2$. In general, the unit cube $Q\subset\R^N$ can be decomposed in a finite number of simplexes and with a finite number $d(N)$ of normals to the flat faces of those simplexes. 
By making small copies of $Q$ and making use of this decomposition in all copies, we can build a family of exhausting, regular triangulations that provide arbitrary, uniform approximations of Lipschitz functions by piecewise affine maps. This is standard and well-known (see, for instance, \cite{ekelandtemam}). 
For $N=2$, three normals suffice, while for dimension $N=3$, seven are necessary, and so on. Because of the difficulties organizing elements, normals, edges, vertices (much in the same way as it is done in the analysis of finite elements), we restrict attention to $N=2$. Higher values of $N$ would require further insight into that way of organizing things. 
For future reference, we will put $\bbP_l$ for the finite-dimensional space of $Q$-periodic, $\tau_l$-piece-wise linear, continuous functions with gradients that are constant on each element of $\tau_l$. 

The description that follows pretends to setup a way to link triangulations $\tau_l$ with the $(H_n)$-hierarchy without reference to specific gradients or to volume fractions. It is just a way to explicitly tag, through the triangles of $\tau_l$, their planar interfaces, and their unions in a hierarchical form, how gradients and volume fractions are to be mixed once they are provided. Said differently, we would like to determine a mold or skeleton $\skeleton$ associated with triangulation $\tau_l$ that will be fed with volume fractions (weights) and gradients (vectors) of functions in $\bbP_l$. 

Before going into the general case for arbitrary values for $l$, we will deal with a particular case to facilitate the understanding of what we are trying to convey. We will take $l=2$. Notice that this particular case would correspond to the example in \cite{sebsze}. 

Suppose we have a triangulation $\tau_2$ of the unit cube $Q$ in $16(=2^2\times2^2)$ small, equal squares, and each one  is divided into two triangles along one of the two diagonals (always the same). We will write
$$
\tau_2=\{T_{s, 2}: s\in\{1, 2, \dots, 32\}\}
$$
for the full collection of triangle of $\tau_2$. Since mixtures of gradients across planar interfaces between adjacent elements in $\tau_l$ must play a fundamental role, our mold will hinge on those in the following way. 

For $192=3\times 2^6$, we want to describe a suitable tri-dyadic mold $\skeletondos$ organized through the three subsets of indexes
\begin{equation}\label{tresz}
I_1\equiv\{1, 2, \dots, 64\},\quad I_2\equiv\{65, 66, \dots, 128\},\quad I_3\equiv\{129, \dots, 192\},
\end{equation}
which will be identified, in any given way, with the three possible subsets of two normals out of the set of three normals $(1, 0), (0, 1), (1, 1)$ utilized in $\tau_l$, for instance
\begin{gather}
j=1\mapsto\hbox{normals }(1, 0), (0, 1)\mapsto I_1,\nonumber\\
j=2\mapsto\hbox{normals }(1, 0), (1, 1)\mapsto I_2,\label{normalesz}\\ 
j=3\mapsto\hbox{normals }(0, 1), (1, 1)\mapsto I_3.\nonumber
\end{gather}
Since planar interfaces in $\tau_l$ furnish compatibility of gradients across it, our mold will hinge on the full set of those interfaces. 
Take $j=1$ to begin with, and focus on
$$
I_1=\{1, 2, \dots, 64\}, \quad\hbox{ normals }(1, 0), (0, 1).
$$
Each planar interface with either normal $(1, 0)$ or $(0, 1)$ in $\tau_2$, the normals corresponding to $j=1$, will be assigned an index $i$, $1\le i\le 32$, in an arbitrary manner, but then $2i$ and $2i-1$ should be the indexes in $I_1$ assigned to the adjacent elements of $\tau_2$ corresponding to interface $i$. Note that this chosen, specific way of tagging planar interfaces with normals $(1, 0)$ or $(0, 1)$ in $\tau_2$ with index $i$ will uniquely determine how subsequent mixtures in later upper levels will be performed as the mixture rule $[2p-1, 2p]$ will always be applied in all levels as it should in a dyadic mold. The arbitrariness of this choice implies that any such manner of labelling planar interfaces is equally valid. 

Proceed similarly with $j=2$, and $j=3$. This means that for $j=2$ planar interfaces with normals $(1, 0)$ or $(1, 1)$ 
will be identified with indexes $i$, $33\le i\le 64$, in a manner that $2i$ and $2i-1$ in $I_2$ are the indexes assigned to the two adjacent elements of $\tau_2$ with the corresponding interface $i$; and similarly with $j=3$. Thus all planar interfaces will be assigned two indexes because corresponding normals occur twice in our tag method according to \eqref{normalesz}, and triangles in $\tau_l$ will be given six different indexes, two for each $j$. Hence, 
this procedure defines a partition of the set of indexes $\{1, 2, \dots, 192\}$ in $32$ subsets $\C_s$ of six elements
\begin{equation}\label{sumazz}
\C_s=\{k_{1s}, k_{2s}, k_{3s}, k_{4s}, k_{5s}, k_{6s}\},\quad k_{is}\in\{1, 2, \dots, 192\},\quad \cup_{1\le s\le 32}\C_s=\{1, 2, \dots, 192\},
\end{equation}
in such a way that the intersection of each $\C_s$ with the three subsets in \eqref{tresz}
is exactly two indexes. Index $s$ identifies elements in $\tau_2$, and so \eqref{sumazz} implies, as just indicated above, that each triangle in $\tau_2$ is identified by six different indexes of $\{1, 2, \dots, 192\}$, two for each of the $I_j$'s, and such indexes are compatible with the labelling of planar interfaces as described in the previous paragraph.

Our tridyadic mold $\skeletondos$ will proceed simultaneously and independently for all $j$ in \eqref{normalesz}, according to Definition \ref{tridiadico}, mixing adjacent elements $2i$ and $2i-1$ through interface $i$, in the first level of mixture; and in the associated dyadic manner $[2p-1, 2p]$ inherited by the chosen tag mechanism for interfaces in all upper levels. Once the three independent dyadic processes have been exhausted, fix any form to mix the three final resulting delta measures using again the basic mixing rule of the $(H_n)$-condition formalism. 

The general case for any value of $l$ is described exactly in the same manner. We explain it in shorter terms. 

We fix a family of triangulations $\{\tau_l\}$, $l=1, 2, \dots$,  in which the two sides of the unit cube $Q$ of $\R^2$ parallel to the coordinate axes are divided in $2^l$ subintervals producing $2^{2l}$ identical sub-squares of side-length $2^{-l}$.  Each such resulting sub-square is divided into two triangles along the diagonal with normal parallel to the vector $(1, 1)$, for instance. In this way we can write
$$
\tau_l=\{T_{s, l}: 1\le s\le 2^{2l+1}\}, \quad T_{s, l}, \hbox{ triangle of $\tau_l$ with area $2^{-1-2l}$}.
$$
After our experience with the case $l=2$, a partition in $2^{2l+1}$ pair-wise disjoint subsets $\C_s$, $1\le s\le 2^{2l+1}$, as many as triangles $T_{s, l}$ in $\tau_l$, of the set of indexes $\{1, 2, \dots, 3\times2^{2l+2}\}$ is possible complying with the following conditions:
\begin{itemize}
\item Each $\C_s$ has six elements, and it should have two indexes in each one of the subsets
\begin{gather}
I_1=\{1, 2, \dots, 2^{2l+2}\},\quad I_2=\{2^{2l+2}+1, 2^{2l+2}+2, \dots, 2\times2^{2l+2}\},\label{subcon}\\ 
I_3=\{2^{2l+2}+1, 2^{2l+2}+2, \dots, 3^{2l+2}\}.\nonumber
\end{gather}
\item $\{\C_s\}$ must be compatible with labels $i$ for planar interfaces in $\tau_l$ exactly in the same way as in the case $l=2$, so that mixtures in a first level are identified with planar interfaces $i$ and corresponding adjacent triangles $2i$ and $2i-1$.  The same dyadic rule $[2p-1, 2p]$ is used throughout all levels of mixture.
\item Each planar interface in $\tau_l$ is tagged with two indexes, and each element with six indexes in a compatible manner for the dyadic mold to be applied. 
\end{itemize}
There is, consequently, an underlying tri-dyadic skeleton $\skeleton$ that will be fed with vectors of weights and constant gradients of functions in $\bbP_l$.  For each $l$, we fix once and for all the tridyadic mold $\skeleton$ among the many possibilities respecting the conditions just described. 

\begin{definition}\label{molde}
Let $l\in\N$, and $\tau_l$, a corresponding uniform triangulation of the unit cube $Q\subset\R^2$. A tri-dyadic mold $\skeleton$ for $\tau_l$ is specified through a coherent labelling of planar interfaces and elements of $\tau_l$ as has been described above. Every such mold $\skeleton$ is associated with a suitable partition $\{\C_s\}$ of the triangles of $\tau_l$ in which each element occurs six times. 
\end{definition}

There are no vectors or functions in this discussion. As has been conveyed in Section \ref{tres3}, 
the chosen tridyadic mold $\skeleton$ will be utilized to produce true $(H_n)$-conditions and associated decomposition directions, as soon as we feed into $\skeleton$ the gradient $\nabla w$ of a function $w\in\bbP_l$, and a vector of weights coming from a suitable set. 
Recall that $\bbP_l$ is taken to be the finite-dimensional space of $Q$-periodic, $\tau_l$-piece-wise linear, continuous functions with gradients that are constant on each $T_{s, l}$.
For later reference, we will designate by $\skeletonj$ the sub-mold associated with the independent processes for each $j=1, 2, 3$. Schematically we can write 
\begin{equation}\label{parcial}
\{\skeletonj: j=1, 2, 3\}\mapsto \skeleton
\end{equation}
to stress that each dyadic $\skeletonj$ is a part of the tridyadic mold $\skeleton$, and that the $\skeletonj$'s proceed independently of each other. In particular, all decomposition directions generated through $\skeleton$ any time weights and gradients are fed, will include those generated by each $\skeletonj$. 
Recall that $\skeleton$ and $\skeletonj$ are fixed once and for all. 

\section{The role of weights and functions}\label{uno}
As it was pointed out several times in Section \ref{tres3}, once the tri-dyadic mold has been selected, it should be fed with weights and vectors to produce true $(H_n)$-conditions.

We start with the description of a suitable set of weights. We again treat first the case $l=2$ to make the discussion more transparent. 

We would like to consider a certain suitable subset $\Theta_2$ of vectors of weights of the standard simplex $\Gamma_{192}$ of $\R^{192}$, associated with the chosen $\skeletondos$ and its corresponding partition $\C_s$ according to Definition \ref{molde}, namely
\begin{align}
\Theta_2=\{\bt=(t_1, t_2, &\dots, t_{192})\in\R^{192}: t_k\ge0, 1\le k\le 192, \label{dominioa}\\
&t_{2i}+t_{2i-1}=1/96, 1\le i\le 96, \label{interfasea}\\
&\sum_{k\in\C_s}t_k=1/32, 1\le s\le 32\}.\label{sumaa}
\end{align}
As in Section \ref{dosz}, bear in mind that:
\begin{enumerate}
\item Index $s$  in  \eqref{sumaa} identifies triangle $T_{s, 2}$ in $\tau_2$. The total mass $1/32$ of each such triangle is divided into six pieces according to the weights $t_{k_{is}}$ in \eqref{sumazz}.
\item Each index $i$ in \eqref{interfasea} identifies one of the $48$ planar interface in $\tau_2$ (recall periodicity). Such identification is made according to the rule in Section \ref{dosz} concerning flat interfaces in $\tau_2$ in each of the subsets $I_j$. 
Each such interfase occurs twice (corresponding to two different indexes $i$), and that is why index $i$ runs from 1 to 96. Condition \eqref{interfasea} furthermore ensures that mixtures coming from planar interfaces of $\tau_2$ will always be made in a fixed proportion $1/96$. There is a special reason for this restriction. 
\end{enumerate}

\begin{definition}\label{pesos}
Let $l\in\N$, and $\tau_l$, a regular, uniform partition of the unit cube $Q\subset\R^2$. Let $\skeleton$ be a specific, tri-dyadic mold associated with $\tau_l$. Let $\C_s$ be the corresponding partition of the elements of $\tau_l$. 
We put
\begin{align}
\Theta_l=\{\bt=(t_1, t_2, &\dots, t_{3\times2^{2l+2}})\in\R^{3\times2^{2l+2}}: t_k\ge0, 1\le k\le 3\times2^{2l+2}, \label{dominio}\\
&t_{2i}+t_{2i-1}=3^{-1}2^{-1-2l}, 1\le i\le 3\times2^{2l+1}, \label{interfase}\\
&\sum_{k\in\C_s}t_k=2^{-1-2l}, 1\le s\le 2^{2l+1}\},\label{suma}
\end{align}
a non-empty, convex, compact subset of the standard simplex 
$$
\Gamma_{3\times2^{2l+2}}=\left\{\bt=(t_1, t_2, \dots, t_{3\times2^{2l+2}})\in\R^{3\times2^{2l+2}}: t_k\ge0, \sum_kt_k=1\right\}.
$$
\end{definition}

The link of the set $\Theta_l$ with our triangulation $\tau_l$, mold $\skeleton$, and partition $\C_s$ should be easy to grasp as it has just been anticipated for the case $l=2$. 
\begin{enumerate}
\item Index $s$  in \eqref{suma} identifies triangles in $\tau_l$.
\item Each $i$ in \eqref{interfase} identifies a planar interface in $\tau_l$, and each planar interface corresponds to two different values of $i$, in such a way that triangles sharing a flat interface associated with index $i$ are to be mixed in proportions $t_{2i}$ and $t_{2i-1}$, and they all are assigned a similar weight $3^{-1}2^{-1-2l}$ every time such interface is utilized in the mold $\skeleton$. 
\end{enumerate}

Concerning functions, let $w\in\bbP_l$, and consider its associated probability measure
\begin{equation}\label{gradiente}
\nu_w=\sum_s2^{-2l-1}\delta_{\left.\nabla w\right|_{T_{s, l}}}
\end{equation}
Recall that $\tau_l=\{T_{s, l}\}$, and hence  each $\left.\nabla w\right|_{T_{s, l}}$ is a constant vector in $\R^2$. The $3\times2^{2l+2}$ vectors to feed, together with weights in $\Theta_l$, the tri-dyadic mold $\skeleton$ (with partition $\C_s$) are
$$
 \bw_k=\left.\nabla w\right|_{T_{s, l}}\hbox{ if }k\in\C_s,\quad k\in\{1, 2, \dots, 3\times2^{2l+2}\}.
 $$

According to Definition \ref{tridiadico}, the $(H_n)$-conditions corresponding to sub-molds $\skeletonj$ proceed, separately for each $j$, in a dyadic manner as it has been explicitly described in Section \ref{tres3}.  If we write
\begin{equation}\label{tresjotas}
\bt=(\bt_1, \bt_2, \bt_3)\in\Theta_l,\quad \bt_j=(t^{(j)}_k), \quad 1\le k\le 2^{2l+2}, j=1, 2, 3, 
\end{equation}
we proceed as in Section \ref{tres3} in a dyadic manner for each $\bt_j$, though there are a couple of particular issues that need to be explicitly emphasized. 
We use the same notation utilized in the discussion right after Proposition \ref{permutacion}, though, for the sake of simplicity, we ignore (super)-index $j$, since the process is formally the same for the three dyadic sub-molds. 
\begin{enumerate}
\item Initialization. As just stressed, we would 
work separately on each $I_j$ in \eqref{subcon}, ignoring the index $j$ for the sake of notational simplicity, and take
\begin{equation}\label{soporte}
t_k^{(m)}=t_k,\quad \bw_k^{(m)}=\left.\nabla w\right|_{T_{s, l}}\hbox{ if }k\in\C_s,
\end{equation}
 for $1\le k\le 2^{2l+2}$. 
\item Recursion. The only point to be noticed is that in our particular situation, because the way in which the set $\Theta_l$ has been defined (recall \eqref{interfase}), we would have
\begin{equation}\label{pesofijo}
t_k^{(m-1)}=2^{1-m}, \quad 1\le k\le 2^{m-1},
\end{equation}
and then
\begin{equation}\label{constante}
\lambda_k^{(p)}=1/2,\quad 1\le p\le m-2, 1\le k\le 2^p.
\end{equation}
\end{enumerate}
All other ingredients of the discussion after Proposition \ref{permutacion} are exactly the same, including the top-to-bottom description, and we will use the same notation 
\begin{equation}\label{elementos}
\bW_k^{(p)}(\bt), \quad\bw_k^{(p)}(\bt), \quad\lambda_k^{(m-1)}(\bt)
\end{equation}
for decomposition directions, mass-points of intermediate probability measures, and relative weights. 
Keep in mind that $\lambda_k^{(p)}=1/2$ for all 
$$
0\le p\le m-1,\quad 1\le k\le 2^p,
$$ 
according to \eqref{constante}.

Finally, to complete the $(H_n)$-condition with the tri-dyadic mold $\skeleton$, we need to keep working with the result of the three independent dyadic processes $\skeletonj$, according to what has been decided in $\skeleton$ until we reach the upper, final vanishing barycenter $\bcero\in\R^2$. Due to periodicity, we should have
\begin{equation}\label{baricentro}
\sum_s2^{-2l-1}\left.\nabla w\right|_{T_{s, l}}=\bcero,
\end{equation}
and we write 
\begin{equation}\label{baricentros}
\bbW_j=\frac1{1/3}\sum_s\sum_{k \in\C_s\cap I_j} t^{(j)}_k \left.\nabla w\right|_{T_{s, l}},\quad \bt_j=(t^{(j)}_k), \quad j=1, 2, 3, \quad \bt=(\bt_1, \bt_2, \bt_3)\in\Theta_l,
\end{equation}
for the barycenters of each of the three dyadic sub-molds $\skeletonj$ which are combined in two additional final mixtures in an arbitrary but fixed way. 
Since
$$
\frac13=\sum_s\sum_{k \in\C_s\cap I_j} t^{(j)}_k
$$
then
\begin{equation}\label{firstcero}
\bcero=\frac13(\bbW_1+\bbW_2+\bbW_3).
\end{equation}
It is convenient to use the same recursive notation for the whole mold $\skeleton$ as has been utilized above for each dyadic $\skeletonj$. In this way, we will indicate
\begin{gather}
t_k^{(m)}=t_k,\quad k\in\{1, 2, \dots, 3\times2^{2l+2}\}, \quad \bt=(t_k)\in\Theta_l,\nonumber\\
\bw_k^{(m)}=\left.\nabla w\right|_{T_{s, l}},\quad k\in\C_s,\nonumber
\end{gather}
for the initial elements to feed $\skeleton$. 
The only difference is that now we have $2l+4$ levels: $2l+2$ for the three dyadic processes, and two additional ones for the last mixture involving the $\bbW_j$'s in \eqref{firstcero}. \eqref{elementos} will also indicate decomposition directions, mass points for intermediate probability measures, and volume fractions for $m=2l+4$ successive levels in $\skeleton$. 

\section{The map}\label{tres}
Take an arbitrary value of $l$. Let $(u, v):Q\to\R^2$ be an arbitrary, two-component vector map of functions $u$ and $v$ in $\bbP_l$. We turn to our mold $\skeleton$, as has been fixed, to produce $(H_n)$-conditions when fed with weights from $\Theta_l$ and values of gradients $\nabla u$ or $\nabla v$. As just indicated above, $m=2l+4$ is the overall number of levels for $\skeleton$. Recall notation in \eqref{elementos} for decomposition directions, mass points, and relative volume fractions. 

A joint, simultaneous rank-one description of the joint probability measure
\begin{equation}\label{margi}
\nu=\sum_s2^{-2l-1}\delta_{\left.(\nabla u, \nabla v)\right|_{T_{s, l}}},
\end{equation}
demands, as a fundamental ingredient, that all decomposition directions  
\begin{equation}\label{dirdescomposicion}
\bU_k^{(p)}(\bt),\quad \bV_k^{(p)}(\bt)
\end{equation}
be proportional to each other for some and the same vector $\bt$ of weights, and for all 
$$
0\le p\le m-1,\quad 1\le k\le 2^p.
$$ 
Clearly for a given feasible vector $\bt\in\Theta_l$, those two sets of decomposition directions  in \eqref{dirdescomposicion} generated by the two components $u$ and $v$ will, most of the time, not be proportional to each other though
it is guaranteed for $p=m-1$ because decomposition directions at this first lower level are taken to be precisely the three normals occurring in $\tau_l$ (for both components $u$ and $v$). Yet one may wonder if one could ensure such parallelism for all levels, enabling different vector of weights: once fixed $u$ and $v$, for each $\bt\in\Theta_l$, are there feasible vectors of weights $\br$ such that  
$$
\bU_k^{(p)}(\bt),\quad\bV_k^{(p)}(\br)
$$ 
are parallel? The definition of our operator is in charge of guaranteeing this property.

Manipulations performed in the preceding section for our $\skeleton$ and for arbitrary $w\in\bbP_l$ and $\bt\in\Theta_l$ can be exactly repeated separately for both $u$ and $v$, respective vectors of weights $\bt$ and $\br$ in $\Theta_l$, corresponding probability measures 
$$
\nu_u=\sum_s2^{-2l-1}\delta_{\left.\nabla u\right|_{T_{s, l}}}, \quad \nu_v=\sum_s2^{-2l-1}\delta_{\left.\nabla v\right|_{T_{s, l}}},
$$
and initial elements
\begin{gather}
t_k^{(m)}=t_k,\quad r_k^{(m)}=r_k, \quad k\in\{1, 2, \dots, 3\times2^{2l+2}\},\nonumber\\
\bu_k^{(m)}=\left.\nabla u\right|_{T_{s, l}},\quad \bv_k^{(m)}=\left.\nabla v\right|_{T_{s, l}},\quad k\in\C_s.\nonumber
\end{gather}
The outcome of such a procedure would be the respective sets of decomposition directions, and the sets of mass points for intermediate probability measures
\begin{equation}\label{descomposicion}
\bU_k^{(p)}(\bt), \quad\bu_k^{(p)}(\bt),\quad 
\bV_k^{(p)}(\br), \quad\bv_k^{(p)}(\br),\quad 0\le p\le m-2, 1\le k\le 2^p.
\end{equation}
The families of weights $\lambda_k^{(m-1)}(\bt)$, $\lambda_k^{(m-1)}(\br)$ are however independent of $u$ or of $v$. 

Recall again that the mold $\skeleton$ has already been selected once and for all as described in Section \ref{dosz}, with a global number $m=2l+4$ of levels. 

\begin{definition}\label{mapeo}
Given $u, v\in\bbP_l$, we define the set-valued map
$$
\bT\equiv\bT_{l, \skeleton, u, v}:\Theta_l\mapsto\Theta_l,
$$
by putting
$$
\bT(\bt)=\{\br\in\Theta_l: \bV_k^{(p)}(\br)\parallel\bU_k^{(p)}(\bt)\hbox{ for all }0\le p\le m-2, 1\le k\le 2^p\}.
$$
\end{definition}
Note again how $\bV_k^{(m-1)}(\br)$, for $p=m-1$, is always parallel to $\bU_k^{(m-1)}(\bt)$ precisely because decomposition directions at the level $p=m-1$ correspond to interfaces between two adjacent elements of the triangulation $\tau_l$. 

The whole point or our concern is the following.
\begin{proposition}\label{facil}
The gradient measure $\nu$ in \eqref{margi} is a laminate if there is a fixed point for $\bT$, i.e. if there is $\bt\in\Theta_l$ such that $\bt\in\bT(\bt)$. 
\end{proposition}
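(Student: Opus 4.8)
The plan is to read the conclusion directly off the recursive $(H_n)$-structure attached to a fixed point, so that essentially all of the real work has already been done in the set-up of Sections \ref{dominio} and \ref{tres}. Suppose $\bt\in\Theta_m$ satisfies $\bt\in\bT(\bt)$. By the very definition of $\bT$ this says precisely that $\bY_i^{(k)}(\bt)\parallel\bX_i^{(k)}(\bt)$ for every $0\le k\le m-2$ and every $1\le i\le 2^k$. At the bottom level $k=m-1$ one has $\bX_i^{(m-1)}(\bt)=\bx_{2i-1}-\bx_{2i}$ and $\bY_i^{(m-1)}(\bt)=\by_{2i-1}-\by_{2i}$, so the adjacency condition built into the choice of $\bbX_m$ and $\bbY_m$ already forces $\bY_i^{(m-1)}(\bt)\parallel\bX_i^{(m-1)}(\bt)$. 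Hence, for a fixed point, \emph{all} decomposition directions are proportional in the two components, at every level $0\le k\le m-1$.

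The second step is to recognise the family
$$
\bigl\{\, t_i^{(k)},\ \bigl(\bx_i^{(k)}(\bt),\by_i^{(k)}(\bt)\bigr),\ \bigl(\bX_i^{(k)}(\bt),\bY_i^{(k)}(\bt)\bigr),\ \lambda_i^{(k)}\,\bigr\}
$$
as a genuine $(H_n)$-condition for the probability measure $\sum_i t_i\,\delta_{(\bx_i,\by_i)}$ on $\M$, and then to invoke the equivalence between $(H_n)$-conditions and laminates recalled in Appendix I. This is a routine check against the definition: the weights are nonnegative and consistent across levels since $t_i^{(k)}=t_{2i-1}^{(k+1)}+t_{2i}^{(k+1)}$ with $t_1^{(0)}=1$; each parent node is the prescribed convex combination of its two children by \eqref{equis}--\eqref{ygriega}; the top node is the barycenter $\bcero$ of $\nu$ because $\nu$ has vanishing first moment; and every splitting step is rank-one, because the difference of the two child matrices at node $(k,i)$ is exactly the $2\times N$ matrix with rows $\bX_i^{(k)}(\bt)$ and $\bY_i^{(k)}(\bt)$, which has rank at most one precisely when these rows are parallel (the degenerate cases in which one or both vanish being harmless). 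It follows that $\sum_i t_i\,\delta_{(\bx_i,\by_i)}$ is a laminate.

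The last step, which I expect to be the delicate one, is to identify this laminate with $\nu$ itself. By the compatibility condition every pair $(\bx_j,\by_j)$ equals $(\bu_i,\bv_i)$ for one honest element $T_i$ of $\Gamma$, so an admissible $\bt\in\Theta_m$ does not merely reproduce the two marginals $\nu_u$ and $\nu_v$: as emphasised at the end of Section \ref{dominio}, it encodes an admissible repartition of $Q$ adapted to the binary tree of the $(H_n)$-formalism, on whose pieces the actual map $(\nabla u,\nabla v)$ is sampled simultaneously in both components. I would use this, together with the compatibility and representation properties, to conclude $\sum_i t_i\,\delta_{(\bx_i,\by_i)}=\nu$; combined with the previous step this gives that $\nu$ is a laminate. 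This is precisely where it matters that $\nu$ comes from a genuine two-component gradient taken with respect to a single triangulation — for an arbitrary probability measure supported in $\M$ the passage $\bu_i\mapsto\bv_i$ would not preserve joint volume fractions and the argument would break down — and it is also why the existence of a fixed point (Section \ref{cuatro}) has to be established separately.
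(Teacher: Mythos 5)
Your first two steps are exactly the argument the paper intends (its own proof is a one-line ``immediate''): a fixed point makes every pair $\bigl(\bX_i^{(k)}(\bt),\bY_i^{(k)}(\bt)\bigr)$ parallel, the adjacency condition already handles $k=m-1$, and the tree of weights and vectors from Section \ref{tres} then assembles into an $(H_n)$-condition for $\sum_j t_j\delta_{(\bx_j,\by_j)}$ in which each splitting direction is the rank-one $2\times N$ matrix with rows $\bX_i^{(k)}$, $\bY_i^{(k)}$. That part is correct and well executed.

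The place where you stop short is the one place where something actually needs to be said: the identification $\sum_j t_j\delta_{(\bx_j,\by_j)}=\nu$. You announce that compatibility and representation ``would'' yield it, but as stated this does not follow. Membership in $\Theta_m$ only constrains the marginal $\sum_j t_j\delta_{\bx_j}=\nu_u$ (and, accepting the paper's remark, $\sum_j t_j\delta_{\by_j}=\nu_v$), while compatibility only forces the support of $\mu_\bt:=\sum_j t_j\delta_{(\bx_j,\by_j)}$ to lie inside $\supp\nu$. A coupling of $\nu_u$ and $\nu_v$ with support in $\supp\nu$ need not equal $\nu$: if, say, four elements carry the pairs $(a,b)$, $(a,c)$, $(d,b)$, $(d,c)$ each with $\lambda_i=1/4$, then $\tfrac12\delta_{(a,b)}+\tfrac12\delta_{(d,c)}$ has the right marginals and admissible support but is not $\nu$. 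So either one must rule out such degeneracies (distinct elements sharing a $\bu$-value but not the $\bv$-value), or --- the cleaner fix --- replace the marginal constraint in the definition of $\Theta_m$ by the joint constraint $\sum_j t_j\delta_{(\bx_j,\by_j)}=\nu$; this set is still compact and convex, still contains $\overline\bt$ by the representation condition, and then your step three becomes a tautology. Without one of these, the proposition as you have argued it only shows that \emph{some} measure with the same marginals as $\nu$ is a laminate.
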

\begin{proof}
The proof is immediate after our above discussion, and the definition of our operator $\bT$. 
\end{proof}

We will be using the following classic result to show the existence of a fixed-point for $\bT$. 

\begin{theorem}\label{kakutani} (Kakutani's fixed point theorem) 
Let $\A\subset\R^d$ be a non-empty, compact, convex set, and let $\F:\A\mapsto \A$ be an upper semicontinuous, set-valued map with non-empty, convex, compact values. Then $\F$ has a fixed point; that is, there is $\bx\in \A$ with $\bx\in\F(\bx)$. 
\end{theorem}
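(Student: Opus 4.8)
The plan is to deduce Kakutani's theorem from Brouwer's fixed point theorem by approximating the set-valued map $\F$ with continuous single-valued maps built on finer and finer triangulations of $\A$. The first thing I would do is recast the hypothesis in the form actually needed: since $\A$ is compact and each value $\F(x)$ is compact, upper semicontinuity of $\F$ is equivalent to $\F$ having a closed graph, i.e. if $x_n\to x$, $y_n\to y$ and $y_n\in\F(x_n)$ for all $n$, then $y\in\F(x)$. This equivalence (standard, via the compactness of $\A$) is the only way in which the semicontinuity assumption will be used.

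Next, for each integer $n\ge1$ I would fix a triangulation $\mathcal T_n$ of the compact convex set $\A$ whose mesh tends to $0$, and construct a continuous map $f_n:\A\to\A$ as follows: at each vertex $v$ of $\mathcal T_n$ pick a point $f_n(v)\in\F(v)$ (possible since $\F(v)\neq\emptyset$), and extend $f_n$ affinely over every simplex of $\mathcal T_n$. Since each $f_n(v)$ lies in the convex set $\A$, the interpolant takes values in $\A$ as well, and it is continuous. Brouwer's fixed point theorem then produces $x_n\in\A$ with $f_n(x_n)=x_n$.

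The limiting step is the heart of the argument. Writing $x_n=\sum_{i=0}^{d}\mu_i^n v_i^n$ as a convex combination of the vertices $v_0^n,\dots,v_d^n$ of the simplex of $\mathcal T_n$ that contains $x_n$, we get $x_n=f_n(x_n)=\sum_{i=0}^{d}\mu_i^n f_n(v_i^n)$ with $f_n(v_i^n)\in\F(v_i^n)$. Passing to a single subsequence (finitely many bounded sequences, all in compact sets) I may assume $x_n\to\hat x$, $\mu_i^n\to\mu_i$ with $\mu_i\ge0$ and $\sum_i\mu_i=1$, and $f_n(v_i^n)\to w_i$ for each $i$. Because the mesh of $\mathcal T_n$ goes to $0$, each $v_i^n$ also converges to $\hat x$; the closed-graph property then forces $w_i\in\F(\hat x)$ for every $i$. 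Finally $\hat x=\lim_n x_n=\sum_{i=0}^{d}\mu_i w_i$ is a convex combination of points of the convex set $\F(\hat x)$, whence $\hat x\in\F(\hat x)$.

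The only genuinely delicate point is this passage to the limit in the graph of $\F$: one must keep the $d+1$ vertex-indices matched consistently along the chosen subsequence — which works precisely because $d+1$ is fixed — and invoke both Brouwer and the equivalence between upper semicontinuity and closedness of the graph under compactness. The remaining ingredients (existence of triangulations of vanishing mesh, continuity of the piecewise-affine interpolant, invariance of $\A$ under $f_n$) are routine, so I would state them without detailed verification.
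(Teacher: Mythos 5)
The paper does not actually prove this statement: Kakutani's theorem is quoted as a classical result and referred to the literature (e.g.\ Smirnov's book), so there is no internal proof to compare against. Your argument is the standard textbook derivation of Kakutani from Brouwer via piecewise-affine selections on triangulations of vanishing mesh, and the core of it --- the Brouwer fixed point $x_n$ of the interpolant, the barycentric decomposition $x_n=\sum_i\mu_i^n f_n(v_i^n)$, the simultaneous subsequence extraction for the $d+1$ matched vertex sequences, the closed-graph passage to $w_i\in\F(\hat x)$, and the final appeal to convexity of $\F(\hat x)$ --- is correct and complete. The equivalence you invoke between upper semicontinuity and closedness of the graph (valid here because $\A$ is compact and the values are closed) is also the right reduction.

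One point you dismiss as routine is not quite: a general non-empty compact convex set $\A\subset\R^d$ (a disk, say) cannot be exactly triangulated into finitely many affine simplices, so ``fix a triangulation $\mathcal T_n$ of $\A$'' needs justification. The standard repair is to first prove the theorem when $\A$ is a simplex (or a polytope), where your construction works verbatim, and then to treat the general case by enclosing $\A$ in a simplex $S$ and applying the simplex case to $x\mapsto\F(r(x))$, where $r:S\to\A$ is the (continuous) nearest-point retraction; any fixed point $\hat x$ of this auxiliary map lies in $\A$ because its values do, and then $\hat x\in\F(r(\hat x))=\F(\hat x)$. Alternatively one can triangulate a polytope containing $\A$ and compose the selection with $r$. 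With that reduction added, your proof is complete; without it, the very first step is not available for a general $\A$.
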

This is a classical theorem on fixed-points for set-valued maps, which is but a generalization of the classic Brower's fixed point theorem. It is well-known and can be found in many places, for instance in \cite{smirnov}. 

The fundamental properties that the application of this result to our framework require are the non-emptiness, the compactness and the convexity of  $\bT(\bt)$ for each $\bt\in\Theta_l$, in addition to the upper semicontinuity. Once these properties are shown, we will have our main result Theorem  \ref{objetivo} as a direct consequence of Proposition \ref{facil}, Theorem \ref{kakutani}, and the standard approximation fact for Lipschitz functions mentioned at the beginning of Section \ref{dosz}. 

We proceed to show first those basic requirements for map $\bT$, while we defer the non-emptiness of images to a final independent section.

\subsection{Basic properties of the map}\label{cuatro}
We start with the upper semicontinuity required by Theorem \ref{kakutani}. 
This property is, as a matter of fact, elementary since if 
$$
\br_n\in\bT(\bt_n),\quad \br_n\to \br, \bt_n\to \bt,
$$
then, we must necessarily have $\br\in\bT(\bt)$. This is straightforward because the dependence of elements in \eqref{elementos} on $\bt$ is continuous. 

On the other hand, the compactness of each subset $\bT(\bt)$ is also clear since all these images are closed subsets of the compact set $[0, 1]^q$ for some finite $q$. 

We treat next the convexity of images. 
Ensuring this convexity property is responsible for the precise definition of the set $\Theta_l$ we have adopted, and the way in which $(H_n)$-conditions have been setup in Sections \ref{tres3} and \ref{uno}. It is pretty clear after the following statement.
\begin{proposition}\label{convexidad}
\begin{enumerate}
\item For 
$$
p=m-1, m-2, \dots, 1,\quad 1\le k\le 2^p,
$$ 
vectors 
$$
\bu_k^{(p)}(\bt),\quad \bv_k^{(p)}(\bt)
$$ 
in \eqref{descomposicion}  depend linearly on $\bt$, and consequently by \eqref{dirdes}, so do decomposition directions 
$$
\bU_k^{(p)}(\bt),\quad \bV_k^{(p)}(\bt).
$$ 
\item For each $\bt\in\Theta_l$, the set $\bT(\bt)$ is convex. 
\end{enumerate}
\end{proposition}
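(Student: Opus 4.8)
The plan is to prove the two parts in order, since the convexity of the images in part (2) will follow almost directly from the linearity established in part (1).

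For part (1), I would argue by downward induction on $k$, starting from $k=m$. At level $k=m$ the vectors $\bx_i^{(m)}(\bt)=\bx_i$ and $\by_i^{(m)}(\bt)=\by_i$ are constant, hence trivially affine (indeed linear, up to the fixed constraints defining $\Theta_m$) in $\bt$. The crucial observation is the one recorded in \eqref{constante}: because $\Theta_m$ was defined with the constraints $t_{2i}+t_{2i-1}=2^{1-m}$, we have $t_i^{(m-1)}=2^{1-m}$ for all $i$, and therefore $\lambda_i^{(k)}=1/2$ for every $k\le m-2$. So in the recursion step \eqref{equis}--\eqref{ygriega}, for $k\le m-2$ the coefficients $\lambda_i^{(k)}$ and $1-\lambda_i^{(k)}$ are the fixed number $1/2$, and $\bx_i^{(k)}(\bt)=\tfrac12\bx_{2i-1}^{(k+1)}(\bt)+\tfrac12\bx_{2i}^{(k+1)}(\bt)$ is a linear combination with constant coefficients of quantities that, by the inductive hypothesis, depend linearly on $\bt$; hence $\bx_i^{(k)}(\bt)$ does too (and likewise $\by_i^{(k)}(\bt)$). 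For the single remaining level $k=m-1$, the formulas \eqref{relabs} give $t_{2i}^{(m)}=t_{2i}$ and $t_{2i-1}^{(m)}=t_{2i-1}$ directly (no division occurs, since the relative weight $\lambda_i^{(m-1)}$ multiplies $t_i^{(m-1)}=2^{1-m}$, a constant), so at this level one is simply reading off coordinates of $\bt$, which is linear. In all cases the decomposition directions $\bX_i^{(k)}(\bt)=\bx_{2i-1}^{(k+1)}(\bt)-\bx_{2i}^{(k+1)}(\bt)$ and $\bY_i^{(k)}(\bt)$ are differences of linear expressions, hence linear in $\bt$ as well. The one point to be careful about is the degenerate branch $\lambda_i^{(k)}=1/2$ when $t_i^{(k)}=0$ in the definition of $\lambda_i^{(k)}$; but \eqref{constante} tells us this degenerate branch never triggers for $k\le m-2$, and at $k=m-1$ the weight $\lambda_i^{(m-1)}$ does not enter the formulas for $\bx_i^{(m-1)}$, so no case distinction survives on $\Theta_m$.

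For part (2), fix $\bt\in\Theta_m$ and take $\bs^{(1)},\bs^{(2)}\in\bT(\bt)$ together with $\theta\in[0,1]$; set $\bs=\theta\bs^{(1)}+(1-\theta)\bs^{(2)}$. Since $\Theta_m$ is convex (Proposition on $\Theta_m$), $\bs\in\Theta_m$. By part (1), $\bY_i^{(k)}(\bs)=\theta\,\bY_i^{(k)}(\bs^{(1)})+(1-\theta)\,\bY_i^{(k)}(\bs^{(2)})$ for $0\le k\le m-2$. By the definition of $\bT(\bt)$, both $\bY_i^{(k)}(\bs^{(1)})$ and $\bY_i^{(k)}(\bs^{(2)})$ are parallel to the fixed vector $\bX_i^{(k)}(\bt)$; hence their convex combination lies on the same line through the origin, i.e. $\bY_i^{(k)}(\bs)\parallel\bX_i^{(k)}(\bt)$. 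Thus $\bs\in\bT(\bt)$, and $\bT(\bt)$ is convex. (The level $k=m-1$ imposes no extra condition in the definition of $\bT$, and in any case $\bY_i^{(m-1)}(\bs)\parallel\bX_i^{(m-1)}(\bs)$ holds automatically by the adjacency property of $\bbX_m,\bbY_m$.)

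The only real subtlety, and the step I would expect to demand the most care, is making sure the linearity claim in part (1) is genuinely unconditional on $\Theta_m$ — that is, that the potentially nonlinear operations hidden in the recursion (the division by $t_i^{(k)}$ in the definition of $\lambda_i^{(k)}$, and the products $t_i^{(k)}\lambda_i^{(k)}$ in \eqref{relabs}) are all neutralized by the special structure of $\Theta_m$. Everything rests on \eqref{constante}, which is exactly why, as the text says, "ensuring this convexity property is responsible for the precise definition of the set $\Theta_m$ we have adopted." Once that is spelled out, both parts are short.
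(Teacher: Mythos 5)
Your argument is correct and follows essentially the same route as the paper: linearity at level $k=m-1$ because $t_i^{(m-1)}=2^{1-m}$ is constant (so $\lambda_i^{(m-1)}(\bt)=2^{m-1}t_{2i}$ is linear and the $\bx_j^{(m)},\by_j^{(m)}$ are fixed), linearity at levels $k\le m-2$ because \eqref{constante} forces $\lambda_i^{(k)}=1/2$, and convexity of $\bT(\bt)$ as an immediate consequence. One sentence should be corrected: you write that ``at $k=m-1$ the weight $\lambda_i^{(m-1)}$ does not enter the formulas for $\bx_i^{(m-1)}$,'' but it does enter via \eqref{equis}; the correct point (which your earlier parenthetical already contains) is that $\lambda_i^{(m-1)}$ is an affine function of $\bt$ since no genuine division by a $\bt$-dependent quantity occurs, so the product $\lambda_i^{(m-1)}(\bt)\,\bx_{2i}^{(m)}$ with the \emph{constant} vectors $\bx_{2i}^{(m)}$ is still linear in $\bt$.
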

\begin{proof}
For the first part, note that 
if we resort to \eqref{equis} for $u$ and $v$, we realize that for $p=m-1$, since 
vectors 
$\bu_k^{(m)}$, $\bv_k^{(m)}$ 
are given and fixed (taken, respectively, from the support of $\nabla u$ and $\nabla v$ according to \eqref{soporte}), those formulas are linear in the components of $\bt$ because weights $\lambda_k^{(m-1)}$ are as a consequence of \eqref{pesosz} and  \eqref{pesofijo}. On the other hand, for 
$$
p=m-2, m-3, \dots, 4, 3, \quad\hbox{(the levels corresponding to the dyadic processes)},
$$ 
\eqref{equis} indicate that  
$\bu_k^{(p)}$,  $\bv_k^{(p)}$
depend linearly on 
$\bu_k^{(p+1)}$, $\bv_k^{(p+1)}$
precisely because this time those relative weights $\lambda_k^{(p)}$ are exactly $1/2$ according to \eqref{constante}. For the last two levels mixing the barycenters in \eqref{baricentros} for $u$ and $v$, the linearity is ensured again because weights involved are constant. 
By the recursive nature of $(H_n)$-conditions, we have the claimed linear dependence. 

The first statement immediately yields the second. If 
$$
\br_j\in\bT(\bt),\quad j=0, 1,
$$ 
and $r\in(0, 1)$, then, for 
$$
\br=r\br_1+(1-r)\br_0,
$$
we will have
$$
\bV_k^{(p)}(\br)=r\bV_k^{(p)}(\br_1)+(1-r)\bV_k^{(p)}(\br_0),
$$
for all $k$ and $p$. 
Hence, if 
$$
\bV_k^{(p)}(\br_j)\parallel \bU_k^{(p)}(\bt),\quad j=0, 1,
$$
so will $\bV_k^{(p)}(\br)$ be. This means that $\br\in\bT(\bt)$, and $\bT(\bt)$ is convex.
\end{proof}
The non-emptiness of images is the true clue of the proof.

\section{Non-emptiness of images}\label{cinco}
Suppose $\bt\in\Theta_l$ is arbitrary, and consider the family of nested decomposition directions 
\begin{equation}\label{direccionesz}
\bU_k^{(p)}\equiv \bU_k^{(p)}(\bt),\quad 0\le p\le m-1, 1\le k\le 2^p,
\end{equation}
associated with the first component $u$ of the vector map $(u, v)\in\bbP_l\times\bbP_l$ in a bottom-to-top description. Both $\bt$ and $u$ are regarded as fixed. 

The question we would like to address is: 
\begin{quote}
Given the fixed set of decomposition directions $\{\bU_k^{(p)}\}$ in \eqref{direccionesz} coming from $u\in\bbP_l$ and $\bt\in\Theta_l$ along $\skeleton$, what are the  functions $w\in\bbP_l$ for which the probability measure corresponding to its gradient $\nabla w$ as in \eqref{gradiente},
admits a decomposition as a $(H_n)$-condition along $\skeleton$ using such same given set $\{\bU_k^{(p)}\}$ of decomposition directions  for whatever weights?
\end{quote}
We claim that the answer to this question is the full space $\bbP_l$, regardless of what the particular collection of chosen decomposition directions $\{\bU_k^{(p)}\}$, and vector of weights $\bt\in\Theta_l$ are, as long as they truly come from a function $u\in\bbP_l$, and a feasible vector of weights $\bt\in\Theta_l$. 

It should be clear how we have to resort to the top-to-bottom description of $(H_n)$-conditions to deal with this issue. 

\begin{proposition}\label{esta}
Let $\{\bU_k^{(p)}\}$ be a system of $(H_n)$-decomposition directions for $\skeleton$ coming from fixed, but arbitrary, $u\in\bbP_l$ and $\bt\in\Theta_l$. For every $w\in\bbP_l$, its corresponding probability measure $\nu_w$ in \eqref{gradiente}
admits a decomposition as a $(H_n)$-condition along $\skeleton$ through the top-to-bottom procedure, with the given system of decomposition directions $\{\bU_k^{(p)}\}$ and  some feasible vector of weights $\br\in\Theta_l$.
\end{proposition}

If this claim is correct, the non-emptiness of each set 
$$
\bT(\bt),\quad \bt\in\Theta_l, \quad u, v\in\bbP_l, 
$$ 
will hold for any arbitrary such pair of functions $u, v$.

\begin{corollary}
For every pair of functions $u, v$ in $\bbP_l$ and each $\bt\in\Theta_l$, the image 
$\bT(\bt)\subset\Theta_l$ is non-empty.
\end{corollary}
\begin{proof}
Let $u$ and $v$ in $\bbP_l$ be given. Take $\bt\in\Theta_l$. Let $\{\bU_k^{(p)}\}$ be the system of $(H_n)$-decomposition directions for $\skeleton$ corresponding to $\bt$ and $u$ as in \eqref{direccionesz}. If Proposition \ref{esta} holds, the gradient of every function $w\in\bbP_l$, in particular $\nabla v$, admits a $(H_n)$-condition representation that can be built along $\skeleton$ with decomposition directions $\{\bU_k^{(p)}\}$ and some legitimate vector of weights in $\Theta_l$. This implies our conclusion.
\end{proof}

We now prove Proposition \ref{esta}. 

\begin{proof}(of Proposition \ref{esta}). 
A first observation is that due to the upper semicontinuity claimed in Subsection \ref{cuatro}, it suffices to show the proposition for a suitable, dense subset $\clase$ of pairs $(u, \bt)\in\bbP_l\times\Theta_l$. This is further stressed below, as some of our arguments are motivated to determine such a dense class. Let, for the time being, $(u, \bt)\in\bbP_l\times\Theta_l$ be given.

We will work first separately on each set $j$ of two normals, according to our discussion in Section \ref{uno}, 
and put
\begin{equation}\label{pesos}
\bt\equiv\bt_j=(t_1^{(j)}, \dots, t_{2^m}^{(j)}),\quad m=2l+2,
\end{equation}
as in \eqref{tresjotas}. In fact, we would need to use the index $j$ to stress that ingredients to be used below could and would be distinct for different values of $j$, though the formal arguments and manipulations would be exactly the same for each $j$. To avoid such complicated notation for decomposition directions and variables, we will be dispensed with the index $j$ except for the vector of weights in \eqref{pesos}. Recall the three sub-molds $\skeletonj$ in \eqref{parcial}, and three sets of normals in \eqref{normalesz}
\begin{gather}
j=1\mapsto\hbox{normals }(1, 0), (0, 1)\mapsto\hbox{ remaining normal }(1, 1),\nonumber\\
j=2\mapsto\hbox{normals }(1, 0), (1, 1)\mapsto \hbox{ remaining normal }(0, 1),\label{normaleszz}\\ 
j=3\mapsto\hbox{normals }(0, 1), (1, 1)\mapsto \hbox{ remaining normal }(1, 0).\nonumber
\end{gather}

Take any specific value for $j=1, 2, 3$. Let
\begin{equation}\label{direcciones}
\bU_k^{(p)}, \quad 0\le p\le 2l+1,\quad 1\le k\le 2^p,
\end{equation}
be the corresponding net of decomposition directions to be used coming from some $u\in\bbP_l$ and $\bt\in\Theta_l$ along $\skeletonj$ with the top-to-bottom description. 
Let real, scalar variables
\begin{equation}\label{variables}
\bS=(S_k^{(p)})_{p=0, 1, \dots, 2l, k=1, \dots, 2^p}
\end{equation}
be used in combination with decomposition directions in \eqref{direcciones}. By this we mean that the system of vectors defined recursively by
\begin{gather}
\bu_1^{(0)}=\bbU_j\equiv\frac1{1/3}\sum_s\sum_{k \in\C_s} t_k^{(j)} \left.\nabla u\right|_{T_{s, l}},\label{bcero}\\
\bu_{2k}^{(p+1)}=\bu_k^{(p)}+S_k^{(p)}\bU_k^{(p)},\quad
\bu_{2k-1}^{(p+1)}=\bu_k^{(p)}-S_k^{(p)}\bU_k^{(p)},\label{ecuanime}
\end{gather}
for $0\le p\le 2l, 1\le k\le 2^p$,
are the mass points of the successive probability measures produced through $\skeletonj$ in a top-to-bottom form starting from $\bu_1^{(0)}=\bbU_j$ in \eqref{bcero}, and utilizing the given net of decomposition directions. 
Note how \eqref{ecuanime} implies that relative volume fractions up to level $2l$ are exactly $1/2$ as required in $\Theta_l$, and that the barycenter in \eqref{bcero} comes from \eqref{baricentros} for each $j$, and for our function $u$ as supplier, together with $\bt$, of decomposition directions.
The final collection of vectors 
\begin{equation}\label{interfases}
\cU=(\bu_k^{(2l+1)})_{1\le k\le 2^{2l+1}}
\end{equation}
of the top-to-bottom procedure will correspond in $\skeletonj$ to flat interfaces of the triangulation $\tau_l$ with either of the two normals in the set $j$ we are  currently working with. 

The main point we would like to stress is that, since decomposition vectors $\bU_k^{(p)}$ are given, constant vectors, the operation
$\bS\mapsto \cU$, given in \eqref{variables} and \eqref{interfases} through \eqref{bcero} and \eqref{ecuanime}, respectively, is linear with a range of dimension $2^{2l+1}-1$,
the number of free variables in $\bS$, most of the time. This is directly related to the dense subset  $\clase$ of $\bbP_l\times\Theta_l$ we are after. 

\begin{claim}\label{rango}
The set of pairs $(u, \bt)\in\bbP_l\times\Theta_l$ for which the range of the indicated linear operation $\bS\mapsto \cU$ has full range is dense in $\bbP_l\times\Theta_l$ (with a negligible complement). 
\end{claim}

Density in $\bbP_l\times\Theta_l$ in this statement is meant to be with respect to itself. Note that in formulas in \eqref{ecuanime}, new variables in $\bS$ are being added successively through constant vectors $\bU_k^{(p)}$. Unless there is some  dependence among these added vectors, the range of the operation $\bS\mapsto \cU$ will be maximal and equal to $2^{2l+1}-1$.

Since decomposition directions in the last level are determined by the value of $j$ (taken from the corresponding set of three normals for $\tau_l$), 
each $\bu_k^{(2l+1)}$ ought to be finally decomposed along one of those two normal directions corresponding to the value of $j$, depending on whether $\bu_k^{(2l+1)}$ corresponds to a flat interface with one or the other normal of the $j$-th set. The point we would like to stress is the following.
\begin{claim}\label{corte}
The value of the gradient of a  function $w\in\bbP_l$ in a certain element of $\tau_l$ is determined uniquely by any set of two vectors corresponding to two of the three planar interfaces of that triangle.
\end{claim}
Geometrically, this is nothing but the elementary fact that any two straight lines in the plane with directions parallel to two of our normals and passing through arbitrary, different points in $\R^2$ (corresponding to those two flat interfaces) always intersect in a single point (the value of the gradient $\nabla w$ at the given triangle). 

In this manner, the collection of mass vectors $\cU$ in \eqref{interfases} corresponding to flat interfaces of $\tau_l$ coming from the $(H_n)$-condition at the level $2l+1$, must determine, in a unique way through normals corresponding to the $j$-th set, the full set of constant gradients 
\begin{equation}\label{compatibilidad}
\left.\nabla w\right|_{T_{s, l}}=\tilde\bu_k^{(2l+1)},\quad k\in\C_s,
\end{equation}
for some $w\in\bbP_l$. If we set
\begin{equation}\label{final}
\tilde\cU=(\tilde\bu_k^{(2l+1)})_{1\le k\le 2^{2l+1}},
\end{equation}
we still have that the passage $\bS\mapsto\tilde\cU$ is a linear operation with the same dimension of its range as that of $\bS\mapsto\cU$. Refer to Figure \ref{una} for a simple, schematic draft of set $\cU$ and $\tilde\cU$. 

\begin{figure}[b]
\includegraphics[scale=0.35]{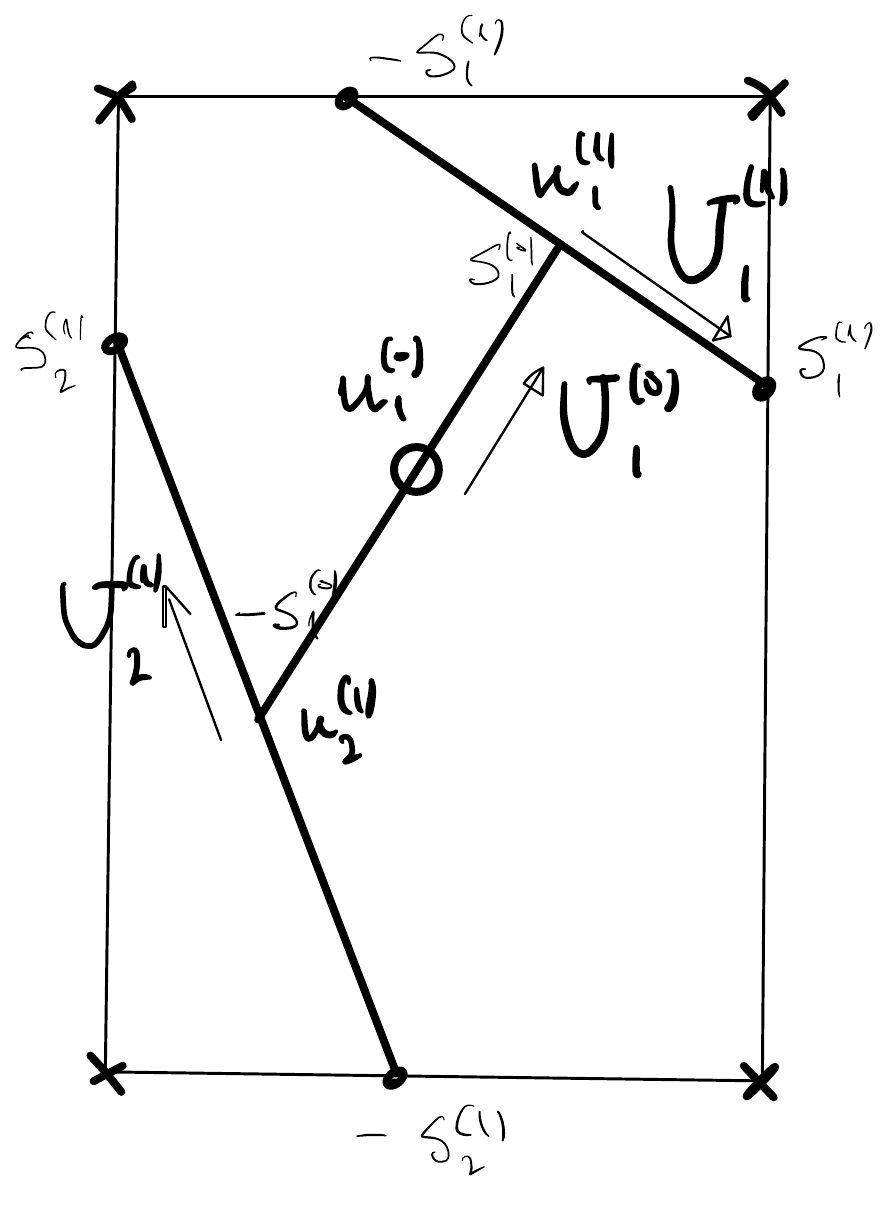}
\caption{The set $\cU$ corresponds to points with a black dot, while $\tilde\cU$ is made up of those with a cross.} \label{una}
\end{figure}

There are now three steps to be covered for the full proof.

Step 1. For \eqref{compatibilidad} to be valid for some $w\in\bbP_l$, we should enforce that those constant vectors at each element $T_{s, l}$ be compatible across interfaces with the third normal not considered in the $j$-th set of normals. Refer to \eqref{normaleszz}. 
We need to demand this explicitly as a single linear constraint on the set $\tilde\cU$ in \eqref{final} for each such planar interface with the third normal. Altogether we will have to enforce $2^{2l}$ such linear constraints. If we impose these restrictions on the set of variables $\bS$, we would end up with a dimension of the range of the linear operation $\bS\mapsto\tilde\cU$ equals to 
$$
2^{2l+1}-1-2^{2l}=2^{2l}-1.
$$
This calculation means that our procedure to generate $(H_n)$-conditions along $\skeletonj$, compatible with all planar interfaces in $\tau_l$ and with the given set of decomposition directions is endowed with $2^{2l}-1$ free parameters. We will say that values for $\bS$ which comply with these sets of $2^{2l}$ linear constraints are feasible. 

Step 2. We refer to Figure \ref{una} to better understand the following issue. Each vector $\bu_i^{(2l+1)}$ (each black dot in Figure \ref{una}) corresponding to an interface (recall that we use the index $i$ to identify planar interfaces for $\tau_l$) with one of the two normals for each value of $j$, will participate in two adjacent triangles of $\tau_l$. Therefore, we need to guarantee that the two associated intersection points (corresponding dots with a cross in Figure \ref{una}), two vectors from $\tilde\cU$ according to our discussion above and corresponding to those two adjacent elements of $\tau_l$, will occur in different sides of $\bu_i^{(2l+1)}$ along the straight line with direction given by the associated normal to interface $i$. Only when this is so would we be able to find a legitimate convex decomposition of $\bu_i^{(2l+1)}$ along the final decomposition direction, i.e. the normal corresponding to interface $i$, leading to some positive relative volume fractions $r_{2i}$, $r_{2i-1}$, on the two adjacent triangles sharing interface $i$ associated with $\bu_i^{(2l+1)}$, and leading to those two corresponding elements of $\tilde\cU$ in \eqref{final}.  We summarize this property in the following claim.

\begin{claim}\label{positivo}
Each vector $\bu_i^{(2l+1)}\in\cU$ can be decomposed as a convex combination, along its associated normal to interface $i$, of two vectors of $\tilde\cU$. 
\end{claim}

To ensure this property, we note that the dimension $2^{2l}-1$ of our linear procedure in the previous step is 
exactly the dimension of gradients in $\bbP_l$ since there are $2^{2l}$ nodes to define functions in $\bbP_l$, but gradients are determined up to an arbitrary additive constant. On the other hand, the passage from nodal values of functions $w\in\bbP_l$ to values of its gradient $\nabla w$ on elements of $\tau_l$ is a linear operation as well. We would like to use linearity together with this matching of dimensions to conclude that all gradients $\nabla w$ for every $w\in\bbP_l$ can be achieved in $\tilde\cU$ in \eqref{final} by $(H_n)$-conditions with the initially given set of decomposition directions in \eqref{direcciones}, and certain values for the variables in $\bS$ complying with those constraints across flat interfaces for the third normal not selected in the set $j$, and guaranteeing  Claim \ref{positivo}. 

As remarked earlier at the beginning of the proof, to take care of this issue we use a standard density or genericity argument by which it suffices to show our conclusion for a suitable, dense class $\clase$ of feasible pairs $(u, \bt)\in\bbP_l\times\Theta_l$
determining decomposition directions $\{\bU_k^{(p)}\}$ to be used along $\skeletonj$ to produce $(H_n)$-conditions. This dense class $\clase$ is determined by the properties:
\begin{enumerate}
\item the values $\left.\nabla u\right|_{T_{s, l}}$ of $\nabla u$ on the elements of $\tau_l$ are all different (to facilitate our discussion avoiding the situation that different elements may share the same gradient of $u$);
\item the set of decomposition direction in the full set $\{\bU_k^{(p)}\}$ allow for a full range of the map $\bS\mapsto \cU$ according to Claim \ref{rango};
\item $\bt\in\Theta_l$ actually belongs to the interior of $\Theta_l$ (with respect to itself).
\end{enumerate}
The density of $\clase$ in $\bbP_l\times\Theta_l$ is clear: if a pair $(u, \bt)\in\bbP_l\times\Theta_l$ does not belong to $\clase$, a slight perturbation of it will move it inside $\clase$. 

Let $(u, \bt)\in\clase$ be given, and determine the set of corresponding decomposition directions along $\skeletonj$ as in \eqref{direcciones}. These are given once and for all. 
Let $\pele_l$ be the subset of functions in $\bbP_l$ whose gradients can be matched in $\tilde\cU$ given in \eqref{final} and its averages over planar interfaces by $\cU$ in \eqref{interfases}, by feasible values of $\bS$ as described above, and guaranteeing, as Claim \ref{positivo} states, that legitimate convex combinations are possible at the final level passing from $\cU$ to $\tilde\cU$. We aim at showing that $\pele_l=\bbP_l$. 

On the one hand, it is elementary to realize that $u$ itself belongs to $\pele_l$ because in this case volume fractions coming from $\bt$ itself generate values for some valid $\bS_0$ through the bottom-to-top formalism. The property in Claim \ref{positivo} is trivially correct in this case. Due to the conditions imposed on $(u, \bt)\in\clase$ allowing for some flexibility in relative weights, 
one realizes that for a certain neighborhood of feasible values for $\bS$ around $\bS_0$, the full-range image of the mappings 
\begin{equation}\label{operaciones}
\bS\mapsto\tilde\cU, \quad \bS\mapsto\cU
\end{equation}
for $\bS$ in such neighborhood, will correspond to functions in some vicinity of $u$ in $\bbP_l$ (their gradients and their averages over planar interfaces, respectively). This is so because of the matching of dimensions discussed earlier. In particular, such vicinity of $u$ will also belong to $\pele_l$, and the subspace generated by $\pele_l$ will be the whole $\bbP_l$. This fact, together again with the linearity of the previous operations, and the matching of dimensions, imply that in fact the full image of both linear operations in \eqref{operaciones} for the full set of feasible values for $\bS$ is the whole space $\bbP_l$ through their gradients and averages over planar interfaces, respectively. But since for true gradients of functions in $\bbP_l$, averages over planar interfaces are alway legitimate convex combinations of the values of gradients in corresponding adjacent elements, we conclude that the full image of those linear operators in \eqref{operaciones} comply with Claim \ref{positivo}, i.e. $\pele_l=\bbP_l$. 

Finally, by density and the upper continuity claimed in Subsection \ref{cuatro}, we can deduce that indeed $\pele_l\equiv\bbP_l$ for every pair $(u, \bt)$ in $\bbP_l\times\Theta_l$. 

The full system of vectors $\tilde\cU$ in \eqref{final} will therefore determine in a unique way the values of the gradients $\nabla w$ on elements of $\tau_l$, and of weights 
\begin{equation}\label{fracinter}
\br=(r_1, r_2, \dots, r_{2l+2}),\quad r_{2i}+r_{2i-1}=2^{-1-2l}.
\end{equation}
In fact, because the way in which the set of vectors $\cU$ in \eqref{interfases} and \eqref{bcero}-\eqref{ecuanime} have been  determined, all weights at the last level are $2^{-1-2l}$ since relative volume fractions at each level have been forced to be $1/2$.

The discussion in this step can be summarized in the following statement.

\begin{quote}
Given a net of decomposition directions $\{\bU^{(p)}_k\}$ coming from some $u\in\bbP_l$ and $\bt\in\Theta_l$ given $\skeletonj$, for each gradient $\nabla w$, $w\in\bbP_l$, there are feasible values for the variables $\bS$ in \eqref{variables} and associated vector of weights in \eqref{fracinter}, depending on $j$, such that the corresponding set of vectors in $\tilde\cU$ in \eqref{final} yields precisely the values
$$
\{\left.\nabla w\right|_{T_{s, l}}\}.
$$
\end{quote}

If we recover index $j$ to distinguish the three parallel processes, 
we have thus shown that for every given $\nabla w$ for $w\in\bbP_l$, there are three vectors $\br_j$, 
\begin{gather}
\br=(\br_1, \br_2, \br_3),\label{treserres}\\
\br_1=(r_1, \dots, r_{2^{2l+2}}), \quad \br_2=(r_{2^{2l+2}+1}, \dots, r_{2\times2^{2l+2}}), \label{treserresd}\\\br_3=(r_{2\times2^{2l+2}+1}, \dots, r_{3\times2^{2l+2}});\nonumber
\end{gather}
and respective laminates $\mu_j$ along $\skeletonj$ and the given system of decomposition directions $\bU^{(p)}_{k, j}$ starting from the first moment $\bbU_j$ given in \eqref{bcero}. Because according to \eqref{firstcero},
$$
\bcero=\frac13(\bbU_1+\bbU_2+\bbU_3),
$$
 the convex combination 
\begin{equation}\label{total}
\mu=\frac13(\mu_1+\mu_2+\mu_3)
\end{equation}
is a laminate with vanishing barycenter and associated vector of weights $\br$ in \eqref{treserres}. We use the same directions and weights in the final upper levels (the ones after the independent dyadic processes for each $j$ to mix them up together) as furnished by $u$ and $\bt$ through $\skeleton$  in order to pass from the $\mu_j'$s to $\mu$ in \eqref{total}. Weights in \eqref{treserresd} provide, through classes $\C_s$, a certain full volume fraction for each triangle $T_{s, l}$ of $\tau_l$. Note that vectors $\bbU_j$, determined by $u$ in \eqref{bcero}, are the same for every $w\in\bbP_l$. 

Step 3. It might happen that the overall weight that the laminate $\mu$ in \eqref{total} assigns to each triangle $T_{s, l}$ is not $2^{-2l-1}$, as it should. We notice, however, that for each $w\in\bbP_l$ with underlying probability measure $\nu_0\equiv\nu_w$ as in \eqref{gradiente}, there are $2^{2l}$ different functions $w_q$ in $\bbP_l$, $1\le q\le 2^{2l}$, (including $w$ itself) generating the same underlying probability measure $\nu_0$: $\nu_{w_q}=\nu_0$ for all $q$. 
This is a typical effect of periodicity and translation: the vectors in the support of  $\nu_0$, the values of the gradient of $\nabla w$ in the triangles of $\tau_l$, can be moved to be the gradients of another $\tilde w\in\bbP_l$  in any other translated configuration of $\tau_l$. Hence, for each $w_q$ we will have a laminate $\mu_q$ as in \eqref{total} together with a corresponding vector $\br_q$ as in \eqref{treserres}. The convex combination
\begin{equation}\label{finalmente}
\mu=2^{-2l}\sum_q\mu_q
\end{equation}
will also be a laminate supported in the values of the gradients of $\nabla w$ in $\tau_l$ which utilizes the same family of decomposition directions provided by $u$ and $\bt$ along $\skeleton$ because this is so for every $\mu_q$, and it must be translation-invariant by construction. This means that the mass that $\mu$ in \eqref{finalmente} assigns to  triangles in $\tau_l$ that can be identified by translation with each other should be identical. 

Once we have this, the reason to conclude that the mass that $\mu$  assigns to every triangle in $\tau_l$ is precisely $2^{-2l-1}$ is elementary because of three simple facts:
\begin{enumerate}
\item the support of $\mu$ and $\nabla w$ is the same;
\item the average values $\F_1$ and $\F_2$ of both probability measures over the two classes of triangles of $\tau_l$ related by translation, as just pointed out, are the same too because both measures assign the same mass to triangles in each class;
\item they both have the null vector $\bcero$ as barycenter.
\end{enumerate}

This argument implies that the vector of weights
$$
\br=2^{-2l}\sum_q\br_q
$$ 
ought to belong to $\Theta_l$ (recall the discussion in Proposition \ref{convexidad} about the convexity of $\bT(\bt)$), and the proof is finished.
\end{proof}

\section{Conclusion}
Once the hypotheses of Theorem \ref{kakutani} have been checked out for our map $\bT$ in Definition \ref{mapeo} (for arbitrary pairs $(u, v)\in\bbP_l\times\bbP_l$, mold $\skeleton$, and arbitrary $l$) in Subsection \ref{cuatro} and Section \ref{cinco}, that theorem lets us conclude the existence of at least one fixed point of it in $\Theta_l$. Proposition \ref{facil} then leads to the desired, fundamental conclusion expressed in Theorem \ref{objetivo}: every discrete gradient probability measure supported in $\R^{2\times2}$ corresponding to a pair in $\bbP_l\times\bbP_l$, for arbitrary $l\in\N$, is a laminate. As a consequence, rank-one convexity implies quasi convexity for two-dimensional, two-component maps.



\begin{thebibliography}{99}

\bibitem{AlibertDacorogna} J. J. Alibert, B. Dacorogna, An example of a quasiconvex function not polyconvex in dimension 2. Arch. Rational Mech. Anal. 117 (1992), 155-166.

\bibitem{astalaiwaniecmartin} K. Astala, T. Iwaniec, G. Martin, Elliptic partial differential equations and quasiconformal mappings in the plane, Princeton Mathematical Series, 48. Princeton University Press, Princeton, NJ, 2009.

\bibitem{astala}  Astala, K., Iwaniec, T., Prause, I., Saksman, E.,
Burkholder integrals, Morrey's problem and quasiconformal mappings
J. Amer. Math. Soc. 25 (2012), no. 2, 507–531.

\bibitem{awi} Awi, R., Šverák, V., Remarks on Morrey's quasi-convexity,
Pure Appl. Funct. Anal. 8 (2023), no. 6, 1573–1586

\bibitem{Ball} J.M. Ball, Convexity conditions and existence theorems in nonlinear elasticity. Arch. Rational Mech. Anal. 63 (1977), 337-403.

\bibitem{bandeiraornelas} L. Bandeira, A. Ornelas, On the Characterization of a Class of Laminates for $2\times 2$ Symmetric Gradients, Journal of Convex Analysis 18 (2011), No. 1 (in press).

\bibitem{BandeiraPedregal} L. Bandeira, P. Pedregal, Finding new families of rank-one convex polynomials. Ann. Inst. H. Poincaré Anal. Non Linéaire 26 (2009), no. 5, 1621-1634.

\bibitem{BandeiraPedregalB} L. Bandeira, P. Pedregal, Quasiconvexity: the quadratic case revisited, and some consequences for fourth-degree polynomials, Adv. Calc. Var. (accepted).

\bibitem{benkru} Benešová, B., Kružík, M.,
Weak lower semicontinuity of integral functionals and applications
SIAM Rev. 59 (2017), no. 4, 703–766.

\bibitem{ChaudMuller} N. Chaudhuri, S. M\"uller, Rank-one convexity implies quasi-convexity on certain hypersurfaces. Proc. Roy. Soc. Edinburgh Sect. A 133 (2003), 1263-1272.

\bibitem{Ciarlet} P. G. Ciarlet, Mathematical Elasticity, vol I: Three-dimensional Elasticity, North-Holland 1987.

\bibitem{DacorognaE} Dacorogna, B. 1985 Remarques sur les notions de
polyconvexit\'e, quasi-convexit\'e et convexit\'e de rang 1, J. Math. Pures Appl.,
64, 403-438.

\bibitem{DacorognaH} Dacorogna, B.
\textit{Direct methods in the Calculus of Variations}, Springer, 2008 (second edition).

\bibitem{DacorognaDouchetGangboRappaz} B. Dacorogna, J. Douchet, W. Gangbo, J. Rappaz, Some examples of rank one convex functions in dimension two. Proc. Roy. Soc. Edinburgh Sect. A 114 (1990), 135-150.

\bibitem{DacorognaKoshigoe} B. Dacorogna, H. Koshigoe, On the different notions of convexity for rotationally invariant functions, Ann. Fac. Sci. Toulouse 2 (1993), 163-184.

\bibitem{DacorognaMarcellini} B. Dacorogna, P. Marcellini, A counterexample in the vectorial calculus of variations, in Material instabilities in continuum mechanics, Oxford Sci. Publ., Oxford (1988), 77-83.

\bibitem{Dal Maso et al.} G. Dal Maso, I. Fonseca, G. Leoni, M. Morini, Higher-order quasiconvexity reduces to quasiconvexity. Arch. Rational Mech. Anal. 171 (2004), 55-81.

\bibitem{dong}  Dong, X, ,  Enakoutsa, K, 
Some Numerical Simulations Based on Dacorogna Example Functions in Favor of Morrey Conjecture, arXiv:2211.11194 (2022).

\bibitem{ekelandtemam} Ekeland, I., Temam, R., Convex analysis and variational problems. Translated from the French. Corrected reprint of the 1976 English edition. Classics in Applied Mathematics, 28. Society for Industrial and Applied Mathematics (SIAM), Philadelphia, PA, 1999. 

\bibitem{faracolaszlo} D. Faraco, L. Szekelyhidi, Tartar's conjecture and localization of the quasiconvex hull in $\Bbb R^{2\times 2}$, Acta Math. 200 (2008), no. 2, 279-305.

\bibitem{Fonseca-Muller} I. Fonseca, S. M\"uller, $\mathcal{A}$-quasiconvexity, lower semicontinuity, and Young measures. SIAM J. Math. Anal. 30 (1999), 1355-1390.

\bibitem{ghiba} Ghiba, I.-D., Martin, R. J., Neff, P., 
Rank-one convexity implies polyconvexity in isotropic planar incompressible elasticity
J. Math. Pures Appl. (9) 116 (2018), 88–104.

\bibitem{grabovsky} Grabovsky, Y., From microstructure-independent formulas for composite materials to rank-one convex, non-quasiconvex functions, Arch. Ration. Mech. Anal. 227 (2018), no. 2, 607–636.

\bibitem{grabovskyd}  Grabovsky, Y., Truskinovsky, L., When rank-one convexity meets polyconvexity: an algebraic approach to elastic binodal, J. Nonlinear Sci. 29 (2019), no. 1, 229–253.

\bibitem{guerra} Guerra, A., Teixeira da Costa, R., Numerical evidence towards a positive answer to Morrey's problem,
Rev. Mat. Iberoam. 38 (2022), no. 2, 601–614.

\bibitem{Heinz} S. Heinz, Quasiconvex functions can be approximated by quasiconvex polynomials. ESAIM: Control, Optimisation and Calculus of Variations 14 (2008), no. 4, 795-801.

\bibitem{iwaniec} T. Iwaniec, Non-linear Cauchy-Riemann operators in $\R^n$, Trans. AMS 354 (2002), 1961-1995.

\bibitem{kruzik} Kružík, M., 
On the composition of quasiconvex functions and the transposition
J. Convex Anal. 6 (1999), no. 1, 207–213.

\bibitem{KristensenB} Kristensen, J., 1999 On the non-locality of
quasiconvexity, Ann. IHP Anal. Non Lineaire,16, 1-13.

\bibitem{martin} Martin, R. J., Ghiba, I.-D., Neff, P., 
Rank-one convexity implies polyconvexity for isotropic, objective and isochoric elastic energies in the two-dimensional case,
Proc. Roy. Soc. Edinburgh Sect. A 147 (2017), no. 3, 571–597.

\bibitem{Meyers} N. Meyers, Quasi-convexity and lower semi-continuity of multiple variational integrals of any order. Trans. Am. Math. Soc. 119 (1965), 125-149.

\bibitem{Morrey} C. B. Morrey, Quasiconvexity and the lower semicontinuity of multiple integrals. Pacific J. Math. 2 (1952), 25-53.

\bibitem{MorreyB} C. B. Morrey, Multiple Integrals in the Calculus of Variations, Springer 1966.

\bibitem{Muller} S. M\" uller, Rank-one convexity implies quasiconvexity on diagonal matrices. Internat. Math. Res. Notices 20 (1999), 1087-1095.

\bibitem{MullerB} S. M\" uller,  Quasiconvexity is not invariant under transposition. Proc. Roy. Soc. Edinburgh Sect. A 130 (2000), no. 2, 389Ð395. 

\bibitem{ParryA} G. P. Parry, On the planar rank-one convexity condition, Proc. Roy. Soc. Edinb. A 125 (1995), 247-264.

\bibitem{pedlam} Pedregal, P., Laminates and microstructure,
European J. Appl. Math. 4 (1993), no. 2, 121–149.

\bibitem{PedregalH} Pedregal, P.\ 1996 Some remarks on quasiconvexity and
rank-one convexity, Proc. Roy. Soc. Edinb., 126A, n 5, 1055-65.

\bibitem{PedregalI} Pedregal, P.  1997 \textit{Parametrized Measures and Variational
Principles}, Birkh\"auser, Basel.

\bibitem{PedregalSverakB} P. Pedregal, and \v Sver\' ak, V. A note on quasiconvexity and rank-one convexity in the case of 2$\times$2 matrices, J. Convex Anal. 5 (1998), 107-117.

\bibitem{rindler} Rindler, F., Calculus of variations. Universitext. Springer, Cham, 2018. 

\bibitem{sebsze} Sebestyen, G., Szekelyhidi, L., Jr., Laminates supported on cubes. J. Convex Anal. 24 (2017), no. 4, 1217–1237. 

\bibitem{SerreA} D. Serre, Formes quadratiques et calcul des variations, J. Math. pures et appl. 62 (1983), 177-196.

\bibitem{smirnov} Smirnov, G. V. \textit{Introduction to the Theory of Differential Inclusions}, Grad. Studies Math., Vol. 41, AMS, Providence, RI, USA.

\bibitem{SverakB} Sverak, V. 1990 Examples of rank-one convex functions, Proc.
Roy. Soc. Edinb., 114A, 237-242.

\bibitem{SverakC} Sverak, V. 1991 Quasiconvex functions with subquadratic growth,
Proc. Roy. Soc. Lond., A433, 723-725.

\bibitem{Sverak} V. \v Sver\'ak, Rank-one convexity does not imply quasiconvexity. Proc. Roy. Soc. Edinburgh Sect. 120 A (1992), 293-300.

\bibitem{SverakE} Sverak, V. 1992 New examples of quasiconvex functions, Arch.
Rat. Mech. Anal., 119, 293-300.

\bibitem{Terpstra} F. J. Terpstra, Die Darstellung biquadratischer Formen als Summen von Quadraten mit Anwendung auf die Variationsrechnung. Math.Ann. 116 (1939), 166-180.

\bibitem{VanHove1} L. Van Hove, Sur l'extension de la condition de Legendre du calcul des variations aux int\'egrales multiples \`a plusieurs fonctions inconnues. Nederl. Akad. Wetensch. Proc. 50 (1947), 18-23.

\bibitem{VanHove2} L. Van Hove, Sur le signe de la variation seconde des int\'egrales multiples \`a plusieurs fonctions inconnues. Acad. Roy. Belgique. Cl. Sci. Mém. Coll. 24 (1949), 68.

\bibitem{voss} Voss, J., Martin, R. J., Sander, O., Kumar, S., Kochmann, D. M., Neff, P., Numerical approaches for investigating quasiconvexity in the context of Morrey's conjecture, J. Nonlinear Sci. 32 (2022), no. 6, Paper No. 77, 41 pp.

\bibitem{vossd} Voss, J., Martin, R- J., Ghiba, I.-D., Neff, P., 
Morrey's conjecture for the planar volumetric-isochoric split: least rank-one convex energy functions, 
J. Nonlinear Sci. 32 (2022), no. 5, Paper No. 76, 49 pp.
\end{thebibliography}
\end{document}